\newcommand\numberthis{\addtocounter{equation}{1}\tag{\theequation}}
\title{\fontsize{15}{18}\selectfont \bf
System Identification from Partial Observations under Adversarial Attacks}
\author{Jihun Kim and Javad Lavaei
\thanks{This work was supported by the U. S. Army Research Laboratory and the U. S. Army Research Office under Grant W911NF2010219, Office of Naval Research under Grant N000142412673, AFOSR, NSF, and the UC Noyce Initiative. Jihun Kim and Javad Lavaei are with the Department of Industrial Engineering and Operations Research, University of California, Berkeley. 
Emails:  {\tt\footnotesize \{jihun.kim, lavaei\}@berkeley.edu}}
}
\theoremstyle{plain} 
\newtheorem{theorem}{Theorem}
\newtheorem{lemma}{Lemma}
\theoremstyle{plain} 
\newtheorem{definition}{Definition}
\newtheorem{assumption}{Assumption}
\theoremstyle{plain}
\newtheorem{example}{Example}
\newtheorem{remark}{Remark}
\DeclareMathOperator*{\argmin}{arg\,min}
\begin{document}

\maketitle 
\thispagestyle{empty}
\pagestyle{empty}

\begin{abstract}
This paper is concerned with the partially observed linear system identification, where the goal is to obtain reasonably accurate estimation of the balanced truncation of the true system up to order $k$ from output measurements. We consider the challenging case of system identification under adversarial attacks, where the probability of having an attack at each time is $\Theta(1/k)$ while the value of the attack is arbitrary. We first show that the $\ell_1$-norm estimator exactly identifies the true Markov parameter matrix for nilpotent systems under any type of attack. We then build on this result to extend it to general systems and show that the estimation error exponentially decays as $k$ grows. The estimated balanced truncation model accordingly shows an exponentially decaying error for the identification of the true system up to a similarity transformation. This work is the first to provide the input-output analysis of the system with partial observations under arbitrary attacks. 
\end{abstract}

\section{Introduction}\label{intro}
Dynamical systems are often highly complex to accurately model from physics, which potentially leads to a considerable number of unknown parameters of the underlying system. The system identification  is to identify these true parameters, given the input and output data \cite{ljung1998sys}. In the fully observed system, all states are measured, meaning that the outputs are identical to the states. The challenge of system identification is often posed by the disturbances injected into the system. Existing methods to deal with this problem include least-squares \cite{simchowitz2018learningwithout,fara2018unstable,  jedra2020finite}, $\ell_2$-norm estimator \cite{yalcin2023exact, zhang2024exact}, and $\ell_1$-norm estimator \cite{kim2025acc}, where each estimator tackles a different type of disturbance.
While the classical least-squares method overcomes sub-Gaussian zero-mean independent disturbances, the work \cite{kim2025acc} considers the general case where the system is affected by sub-Gaussian, nonzero-mean, and possibly adversarial attacks. 

However, one may not be able to measure all states of the system in many applications, including robotics \cite{lauri2022survey}, healthcare \cite{ala2014heart}, and complex safety-critical systems \cite{ben2009sto}. 
This partial measurement of the states hinders accurate system identification since it introduces an additional challenge of inferring unmeasured states from the observations. 
For this reason, instead of directly estimating the system parameters, it would be beneficial to first estimate the Markov parameter matrix using the observations, since a sufficiently large set of Markov parameters enables accurate reconstruction of the original system \cite{skelton1994lqg, fledder2002markov}.

The existing literature mainly used the least-squares method to estimate the Markov parameters, assuming zero-mean, independent, sub-Gaussian disturbances \cite{oymak2022revisit, sarkar2021finite}. 
A variant of least-squares method is given in \cite{simchowitz2019semi}, 
where the disturbances are predictable based on past observations. 
While the least-squares method provides a satisfactory estimator for such restrictive disturbances, little is known about the partially observed system identification when the disturbances are fully selected adversarially, leveraging past information to enhance their adversarial nature.


\begin{figure}
    \centering
    \includegraphics[width=0.9\linewidth, height=0.46\linewidth]{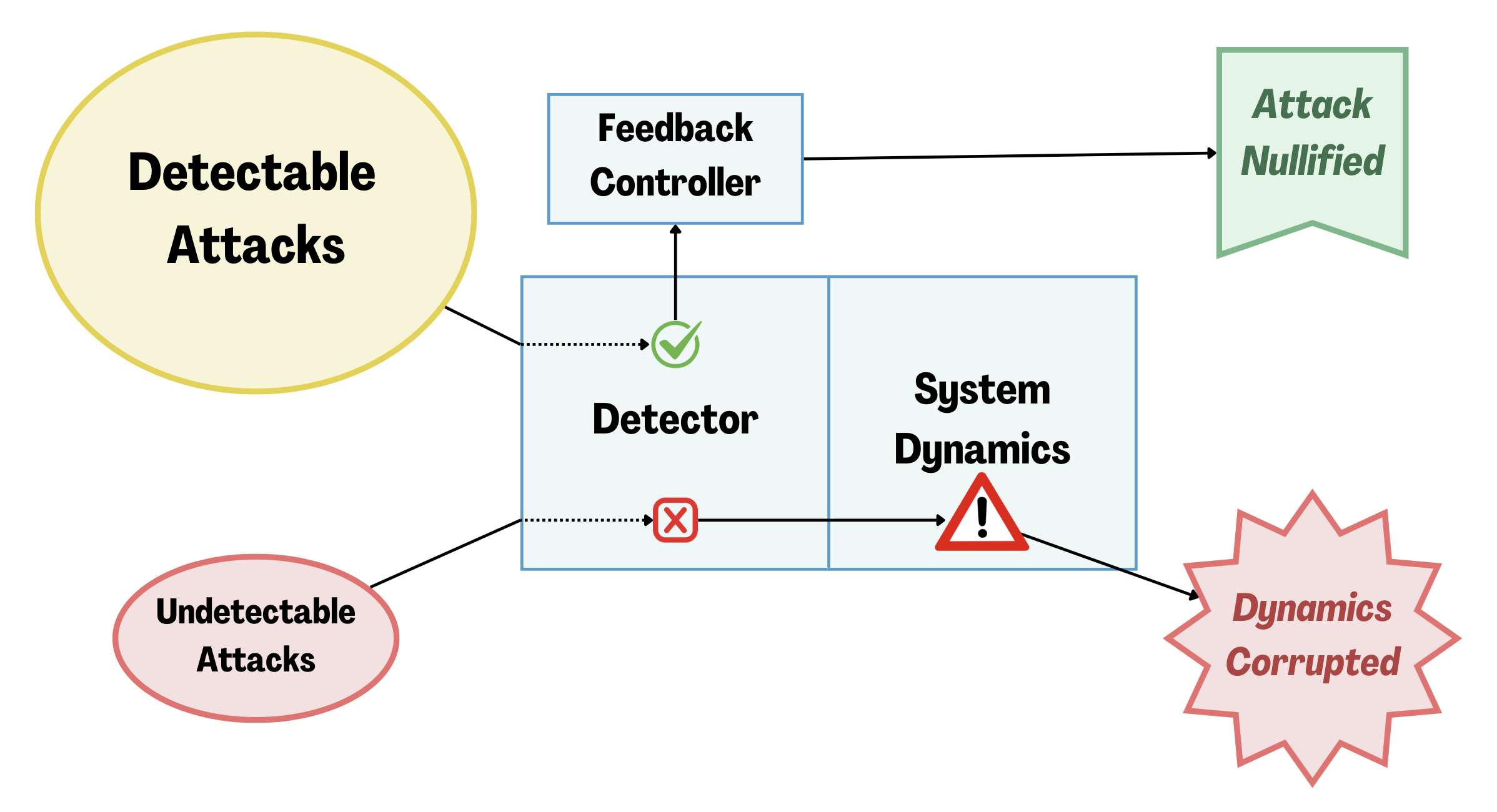}
    \caption{Detectable attacks vs. Occasional undetectable attacks}
    \label{fig:cyber}
    \vspace{-3mm}
\end{figure}
In this paper, we focus on the partially observed linear system identification and obtain the balanced truncated model of the true system up to order $k$, where we allow fully adversarial attacks to occur at each time with probability $\Theta(1/k)$.
Our attack model applies to the case when an extremely large attack may occasionally affect the system, such as natural disasters in power grids \cite{wang2016power, waseem2020partial}, unanticipated malicious cyberattacks \cite{pasqualetti2013cyber,hauser2017cyber}, and others. In particular, 
cyberattacks can be broadly classified as detectable or undetectable, where most attacks are effectively detected and nullified by well-designed detectors and controllers, preventing corruption of the system dynamics. However, undetectable attacks can occasionally occur when a strong adversary leverages complete knowledge of the system to craft a sophisticated attack \cite{mo2015secure, show2016system}. These so-called \textit{stealthy} attacks slip through the system while remaining unnoticed (see Figure \ref{fig:cyber}). 
    
We first estimate the Markov parameter matrix with an $\ell_1$-norm estimator by building on  \cite{kim2025acc}. We construct two scenarios on the true system and show that:
\begin{enumerate}
    \item the true Markov parameter matrix is the unique solution to the $\ell_1$-norm estimator for a nilpotent system,
    \item the estimation error of the Markov parameter matrix exponentially decays with $k$ for a general system.
\end{enumerate}

Following the estimation of the Markov parameter matrix, we retrieve the estimated balanced truncation up to order $k$, where we show that the error also decays exponentially with $k$ within the similarity transformation.

The paper is organized as follows.  Sections \ref{Prelim} and \ref{sec: prob formulation} present the preliminaries and problem formulation. In Section \ref{exact recovery}, we prove that the $\ell_1$-norm estimator achieves exact recovery for a nilpotent system and derive a bounded estimation error for a general system under adversarial attacks. 
Section \ref{retrievesys} leverages these results to obtain an accurate approximation of the true system. 
In Section \ref{sec: experiments}, we present numerical experiments to support our theoretical findings.
Finally, concluding remarks are provided in Section \ref{sec: conclusion}.

\textbf{Notation.} Let $\mathbb{R}^n$ denote the set of $n$-dimensional vectors and $\mathbb{R}^{n\times n}$ denote the set of $n\times n$ matrices. For a matrix $A$, $\|A\|_2$ denotes the spectral norm and $\|A\|_F$ denotes the Frobenius norm of the matrix. 
$A_{[n_1:n_2], [m_1:m_2]}$ denotes the submatrix of $A$ that contains the rows from the $n_1^\text{th}$ to the $n_2^\text{th}$ row and the columns from the $m_1^\text{th}$ to the $m_2^\text{th}$ column. Let $A^{-1}$ denote the inverse, $A^{\dagger}$ denote the pseudoinverse, and $A^T$ denote the transpose of the matrix. Let $I_n$ denote the identity matrix in $\mathbb{R}^{n\times n}$.
For a vector $x$, $\|x\|_1$ denotes the $\ell_1$-norm  and $\|x\|_2$ denotes the $\ell_2$-norm of the vector. 
For a scalar $z$, $\text{sgn}(z)=1$ if $z>0$, $\text{sgn}(z)=-1$ if $z<0$, and $\text{sgn}(z)=0$ if $z=0$.
Let $\mathbb{E}$ denote the expectation operator.  
For an event $\mathcal{E}$, $\mathbb{P}(\mathcal{E})$ denotes the probability of the event, and the function
$\mathbb{I}\{E\}$ equals $1$ if $E$ occurs and $-1$ otherwise.
We use $\Theta(\cdot)$ for the big-$\Theta$ notation, and $\tilde{\Theta}(\cdot)$ for the big-$\Theta$ notation hiding logarithmic factors. 
Let $N(\mu,\Omega)$ denote the Gaussian distribution with mean $\mu$ and covariance $\Omega$.
Finally, let $\mathbb{S}^{n-1}$ denote the set $\{y\in\mathbb{R}^n : \|y\|_2=1\}$.

\section{Preliminaries}\label{Prelim}
In this work, we model each attack on the system as a sub-Gaussian vector variable (note that bounded attacks automatically satisfy our assumption). We begin by introducing the scalar variable defined in \cite{vershynin2025high}. 

\begin{definition}[sub-Gaussian scalar variables]\label{subgdef1}
    A random variable $w\in \mathbb{R}$ is called sub-Gaussian if there exists $c>0$ such that
    \begin{equation}\label{subG}
    \mathbb{E}\Bigr[\exp\Bigr(\frac{w^2}{c^2}\Bigr)\Bigr]\leq 2.
    \end{equation}
    Its sub-Gaussian norm is denoted by $\|w\|_{\psi_2}$ and defined as 
    \begin{equation}\label{norm}
       \|w\|_{\psi_2} = \inf\left\{c>0: \mathbb{E}\Bigr[\exp\Bigr(\frac{w^2}{c^2}\Bigr)\Bigr]\leq 2\right\}. 
    \end{equation}
\end{definition}

The $\psi_2$-norm satisfies the norm properties: positive definiteness, homogeneity, and the triangle inequality. Note that the following properties hold for a sub-Gaussian $w$:
\begin{subequations}
    \begin{align}
        &\mathbb{E}[|w|]\leq c_1  \|w\|_{\psi_2}, \label{firstprop}\\ 
        & \mathbb{P}(|w|\geq s) \leq 2\exp(-c_2 s^2/\|w\|_{\psi_2}^2) ,\quad \forall s\geq 0,\label{secondprop}
        \\& \mathbb{E}[e^{\lambda w}]\leq \exp (c_3 \lambda^2 \|w\|_{\psi_2}^2), ~~\forall \lambda\in\mathbb{R} ~~ \text{if }\mathbb{E}[w] = 0,\label{thirdprop}
    \end{align}  
\end{subequations}
    where $c_1,c_2, c_3$ are positive constants. For example,  if $w\sim N(0, \gamma^2 )$, we have $\|w\|_{\psi_2} = \Theta(\gamma)$ due to \eqref{thirdprop}. Moreover,  property \eqref{secondprop} splits into two inequalities if $\mathbb{E}[w]=0$:
    \begin{subequations}
    \begin{align}
      & \mathbb{P}(w\geq s) \leq \exp(-c_2 s^2/\|w\|_{\psi_2}^2) ,\quad \forall s\geq 0,\label{plus}
        \\&\mathbb{P}(w\leq -s) \leq \exp(-c_2 s^2/\|w\|_{\psi_2}^2) ,\quad \forall s\geq 0.\label{minus}
    \end{align}
    \end{subequations}

We introduce the following useful lemmas to analyze the sum of independent noncentral sub-Gaussians \cite{vershynin2025high}. 
    



\begin{lemma}[Centering lemma]\label{centering}
    If $w$ is a sub-Gaussian, then so is $w-\mathbb{E}[w]$ and there exists $C>0$ such that
    \begin{equation}
        \|w-\mathbb{E}[w]\|_{\psi_2} \leq 
        C\|w\|_{\psi_2}.
    \end{equation}
\end{lemma}
\begin{lemma}\label{independent}
    Let $w_1,\dots, w_N$ be independent, mean zero, sub-Gaussian random variables. Then, $\sum_{i=1}^N w_i$ is also sub-Gaussian and its sub-Gaussian norm is $\Theta\bigr((\sum_{i=1}^N \|w_i\|_{\psi_2}^2)^{1/2}\bigr)$. For example, if $w\sim N(0, \gamma^2 I_m)$, then  $\left\|\|w\|_2\right\|_{\psi_2} = \Theta(\gamma\sqrt{m})$ due to Jensen's inequality.
\end{lemma}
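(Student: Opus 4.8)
The plan is to prove the assertion in two halves — an upper bound $\|S\|_{\psi_2}\le CK$ and a matching lower bound — where I abbreviate $S:=\sum_{i=1}^N w_i$ and $K^2:=\sum_{i=1}^N\|w_i\|_{\psi_2}^2$. For the upper bound I would route everything through the moment generating function. Since each $w_i$ has zero mean, property \eqref{thirdprop} gives $\mathbb{E}[e^{\lambda w_i}]\le\exp(c_3\lambda^2\|w_i\|_{\psi_2}^2)$ for every $\lambda\in\mathbb{R}$, and the identical bound holds for $-w_i$ by homogeneity of the $\psi_2$-norm. Independence of $w_1,\dots,w_N$ then makes the MGF of the sum factorize, so $\mathbb{E}[e^{\lambda S}]=\prod_{i=1}^N\mathbb{E}[e^{\lambda w_i}]\le\exp(c_3\lambda^2K^2)$ for all $\lambda\in\mathbb{R}$; this is exactly the MGF bound \eqref{thirdprop} associates with a centered sub-Gaussian of norm $\Theta(K)$.

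It then remains to turn that MGF bound back into a $\psi_2$-norm bound. A Chernoff argument applied to $\{S\ge t\}$ and to $\{-S\ge t\}$, followed by optimizing the free parameter $\lambda$, yields a two-sided tail bound $\mathbb{P}(|S|\ge t)\le 2\exp(-c\,t^2/K^2)$ for a universal $c>0$. Feeding this into the layer-cake identity $\mathbb{E}[e^{S^2/\kappa^2}]=1+\int_0^\infty\frac{2t}{\kappa^2}e^{t^2/\kappa^2}\,\mathbb{P}(|S|\ge t)\,dt$ and taking $\kappa^2$ to be a large enough multiple of $K^2$, the integrand becomes integrable and a one-line substitution bounds the right-hand side by $2$, so by Definition \ref{subgdef1} we get $\|S\|_{\psi_2}\le\kappa K=\Theta\bigl((\sum_i\|w_i\|_{\psi_2}^2)^{1/2}\bigr)$.

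For the matching lower bound I would reuse precisely the Jensen step flagged in the statement's Gaussian example: convexity of $x\mapsto e^x$ gives $\mathbb{E}[e^{S^2/\kappa^2}]\ge e^{\mathbb{E}[S^2]/\kappa^2}$, so the defining constraint $\mathbb{E}[e^{S^2/\|S\|_{\psi_2}^2}]\le 2$ forces $\|S\|_{\psi_2}\ge(\mathbb{E}[S^2]/\ln 2)^{1/2}$; and independence together with zero means gives $\mathbb{E}[S^2]=\sum_i\mathbb{E}[w_i^2]$, which is of order $\sum_i\|w_i\|_{\psi_2}^2$ for the variables relevant here (Gaussian, or bounded). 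Combining the two halves gives the stated $\Theta$. The closing remark of the lemma, $\left\|\|w\|_2\right\|_{\psi_2}=\Theta(\gamma\sqrt m)$ for $w\sim N(0,\gamma^2 I_m)$, is a separate illustration rather than an instance of the sum formula: its upper half is Gaussian Lipschitz concentration of $\|w\|_2$ about its mean $\le\gamma\sqrt m$ combined with the triangle inequality for $\|\cdot\|_{\psi_2}$, and its lower half is again the same Jensen inequality. The only step in all of this that is not essentially mechanical is the passage from the MGF bound to the $\psi_2$-norm bound, i.e.\ re-deriving the ``tail $\Rightarrow\psi_2$'' direction of the sub-Gaussian equivalences, since the excerpt records only the forward implications \eqref{firstprop}--\eqref{thirdprop}; the factorization of the MGF, the Chernoff step, and the Jensen step are all routine.
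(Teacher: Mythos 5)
The paper never proves Lemma \ref{independent}: it is quoted from \cite{vershynin2025high} (Proposition 2.6.1 there), so there is no in-paper argument to compare against. Your reconstruction of the upper bound is the standard textbook proof and is correct: the MGF of $S$ factorizes by independence, \eqref{thirdprop} gives $\mathbb{E}[e^{\lambda S}]\le\exp(c_3\lambda^2 K^2)$, the two one-sided Chernoff bounds yield $\mathbb{P}(|S|\ge t)\le 2\exp(-ct^2/K^2)$, and the layer-cake integration with $\kappa^2$ a large multiple of $K^2$ brings this back to $\mathbb{E}[e^{S^2/\kappa^2}]\le 2$, hence $\|S\|_{\psi_2}=O(K)$ by Definition \ref{subgdef1}. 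You are also right that the ``tail $\Rightarrow\psi_2$'' step is the only nontrivial link, since the paper records only the forward implications \eqref{firstprop}--\eqref{thirdprop}. This upper-bound direction is the only one the paper ever invokes (e.g.\ in the proof of Lemma \ref{sumxt}), so the substantive content of your argument is in order, and your reading of the closing Gaussian example as a separate fact (Lipschitz concentration plus the triangle inequality for the upper half, Jensen for the lower half) is likewise correct: $\|w\|_2$ is neither mean zero nor a sum of the coordinates, so it is not an instance of the sum formula.

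One concrete correction on the lower-bound half. Jensen gives $\|S\|_{\psi_2}^2\ge\mathbb{E}[S^2]/\ln 2=\sum_i\mathbb{E}[w_i^2]/\ln 2$, which matches $K^2$ only when each $\mathbb{E}[w_i^2]\gtrsim\|w_i\|_{\psi_2}^2$. You assert this holds for ``Gaussian, or bounded'' variables; the bounded case is false. A variable taking values $\pm M$ with probability $\epsilon$ each and $0$ otherwise is bounded and mean zero, has variance $2\epsilon M^2$, but has $\|w\|_{\psi_2}^2=M^2/\ln(1+\tfrac{1}{2\epsilon})=\Theta(M^2/\log(1/\epsilon))$, so the ratio $\mathbb{E}[w^2]/\|w\|_{\psi_2}^2\to 0$ as $\epsilon\to 0$. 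The same example shows that the two-sided $\Theta$ in the statement of Lemma \ref{independent} cannot hold for arbitrary mean-zero sub-Gaussians: the correct general statement is only $\|\sum_i w_i\|_{\psi_2}^2\le C\sum_i\|w_i\|_{\psi_2}^2$, with the matching lower bound reserved for variables, such as Gaussians, whose variance is comparable to their squared $\psi_2$-norm. So the residual gap is inherited from the loose $\Theta$ in the statement rather than introduced by your proof; just drop the claim about bounded variables.
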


We introduce the notion of sub-Gaussian vectors below. 
\begin{definition}[sub-Gaussian vector variables]\label{subgvec}
    A random vector $w\in\mathbb{R}^d$ is called sub-Gaussian if for every $x\in\mathbb{R}^d$, $w^T x$ is a sub-Gaussian variable. Its norm is defined as
    \begin{equation}\label{normdef}
        \|w\|_{\psi_2} = \sup_{\|x\|_2 \leq 1, x\in\mathbb{R}^d}\|w^T x\|_{\psi_2}. 
    \end{equation}
\end{definition}
\medskip




\section{Problem Formulation} \label{sec: prob formulation}

Consider a linear time-invariant system represented by:
\begin{align*}
    x_{t+1} &= A x_t + Bu_t+w_t,\numberthis\label{sysd}\\y_t&= Cx_t + Du_t, \hspace{10mm}t=0,1,\dots,
\end{align*}
where $A\in\mathbb{R}^{n \times n}, B\in\mathbb{R}^{n \times m}, C\in\mathbb{R}^{r \times n}, D\in\mathbb{R}^{r \times m}$ are unknown system matrices, $x_t\in\mathbb{R}^n$ is the state, $u_t\in \mathbb{R}^m$ is the control input, and $y_t \in \mathbb{R}^r$ is the observation at time $t$.
$w_t\in \mathbb{R}^n$ is the attack injected into the system at time $t$ which occasionally happens. We assume that the attack times are selected with probability $p$, and
$w_t$ is identically zero when the system is not under attack. We allow $w_t$ to be completely arbitrarily chosen by an adversary at the attack times. 

We design the control inputs $u_0, u_1, \dots$ to be Gaussian. Given the observation trajectory $y_0, y_1,\dots$,
 our goal is to accurately approximate the true matrices $A, B, C,D$. We assume that $\|A\|_2$ is less than 1 and $x_0, w_0, w_1, \dots$ are all sub-Gaussian to prevent an unbounded growth of the system states.  
We formally present our assumptions below. 

\begin{assumption}[Spectral Norm]\label{stability}
    It holds that $\|A\|_2< 1$, \textit{i.e.}, the maximum singular value of $A$ is less than 1 (this condition can be relaxed to stability, as stated in Remark \ref{gotostab}).
\end{assumption}

\begin{assumption}[Sub-Gaussian norm]\label{maxnorm}
 Define a filtration $\mathcal{F}_t = \bm{\sigma}\{x_0, w_0, \dots, w_{t-1}\}$.
    There exists $\eta >0$ such that $\|x_0\|_{\psi_2} \leq \eta$ and 
    $\|w_t\|_{\psi_2}\leq \eta$ conditioned on $\mathcal{F}_t$ for all $t\geq 0$.
\end{assumption}

Under partial observability, the behavior of the system transferred from the control inputs $u_t, u_{t-1}, \dots, u_0$ to the output observation $y_t$ is represented by the transfer function $C(zI-A)^{-1}B+D$, a function involving the coefficients $CB$, $CAB$, $CA^2 B$, and so forth. Thus, it is generally impossible to characterize the observability without the interaction between $A$, $B$, and $C$. To this end, the Hankel matrix provides a tool for systemic input-output analysis. We introduce this notion below.

\begin{definition}[Hankel Matrix]\label{hankeldef}
    The $(\alpha, \beta)$-dimensional Hankel matrix for $(A,B,C)$ is defined as 
    \[
    \mathcal{H}_{\alpha,\beta}=  \begin{bmatrix}
        CA^\alpha B & CA^{\alpha+1}B & \cdots & CA^{\alpha+\beta-1}B \\ CA^{\alpha+1}B & CA^{\alpha+2}B & \cdots & CA^{\alpha+\beta}B &  \\ \vdots & \vdots & \ddots &\vdots \\ CA^{\alpha+\beta-1}B & CA^{\alpha+\beta}B & \cdots& CA^{\alpha+2\beta-2}B
    \end{bmatrix}. 
    \]
    We also denote $\mathcal{\bar{H}}_{\alpha,\beta}$ as the zero-padded matrix of $\mathcal{H}_{\alpha,\beta}$, where the right and bottom parts are extended infinitely with zeros,  with $\mathcal{H}_{\alpha,\beta}$ as its leading principal submatrix. 
\end{definition}

We aim to approximate the full Hankel matrix $\mathcal{H}_{0,\infty}$ given observations and control inputs. As a proxy of $\mathcal{H}_{0,\infty}$, 
we will estimate the Hankel matrix $\mathcal{H}_{0, k}$ for some natural number $k$, which requires the information of $CB, CAB, \dots, CA^{2k-2}B$. To this end, we define the following notion.
\begin{definition}[Markov parameter matrix]\label{markov}
    From the true system $(A,B,C,D)$, the Markov parameter matrix required to recover the matrix $D$ and the Hankel matrix $\mathcal{H}_{0,k}$ is denoted as $G_k^*$ and defined by 
    \begin{equation}
        G_k^* = [D~CB~ CAB~ \cdots ~ CA^{2k-2}B].
    \end{equation}
\end{definition}

To establish the relationship between the observations and the control inputs, one can write
\begin{subequations}\label{firstsecondthird}
    \begin{align*}
        y_t &= G_k^*\cdot [u_t^T~ u_{t-1}^T ~ \cdots~ u_{t-2k+1}^T]^T \numberthis\label{firstline}\\ &\hspace{2.5mm}+ [C~ CA ~ \cdots ~ CA^{2k-2}] \cdot[w_{t-1}^T~ \cdots ~ w_{t-2k+1}^T]^T\numberthis\label{secondline}\\&\hspace{2.5mm}+ CA^{2k-1}x_{t-2k+1}. \numberthis\label{thirdline}
    \end{align*}
\end{subequations}

Based on \eqref{firstsecondthird}, we propose the following $\ell_1$-norm estimator given $T$ observations $y_{2k-1},\dots, y_{T+2k-2}$ and the control inputs $u_0, \dots, u_{T+2k-2}$:
   \begin{equation}\label{l1est}
      \min_{G\in \mathbb{R}^{r\times 2km}} \sum_{t=2k-1}^{T+2k-2}\|y_t-G\mathbf{U}_t^{(k)}\|_1,
   \end{equation}
   where $\mathbf{U}_t^{(k)} = [u_t^T ~ u_{t-1}^T ~ \cdots ~ u_{t-2k+1}^T]^T$. 
    We will show that the $\ell_1$-norm estimator successfully overcomes adversarial attacks and that the estimate will be close to the true Markov parameter matrix $G_k^*$ within finite time.

However, solving for $A$, $B$, $C$ from $G_k^*$ is a nonconvex problem, resulting in infinitely many solutions up to a similarity transformation. To address this issue, it turns out that the balanced truncation can be recovered up to order $k$ from $G_k^*$.  We formally introduce this notion given in \cite{kung1981hankel} below.

\begin{definition}[$d$-order balanced truncated model]\label{dorder}
    Let the singular value decomposition (SVD) of the matrix $\mathcal{H}_{0,\infty}$ be given as $U\Sigma V^T$, where $\Sigma\in \mathbb{R}^{n\times n}$ is a diagonal matrix with singular values $\sigma_1\geq \dots\geq \sigma_n\geq 0$. Then for any $d\in \{1,\dots n\}$, the $d$-order balanced truncated model is defined as
    \begin{subequations}
         \begin{align*}
        &C^{(d)} = (U\Sigma^{1/2})_{[1:r], [1:d]}, ~ B^{(d)} = (\Sigma^{1/2}V^T)_{[1:d], [1:m]} \\ & A^{(d)} = (U\Sigma^{1/2})_{[1:\infty], [1:d]}^{\dagger} (U\Sigma^{1/2})_{[r+1:\infty], [1:d]}
    \end{align*}
    \end{subequations}
\end{definition}

Our ultimate goal is to recover a precise estimate of the balanced truncated model $A^{(d)}, B^{(d)}, C^{(d)}$ up to $d\in\{1,\dots,k\}$ given a predetermined $k$, under the accurate estimate of  $G_k^*$ obtained via the $\ell_1$-norm estimator. However, the occurrence of adversarial attacks potentially hinders the recovery of a high-order model. To potentially mitigate the impact of arbitrarily malicious attacks, 
we introduce an assumption on the attack time probability. 

\begin{assumption}[Probabilistic Attack]\label{null}
    $w_t$ is an attack at each time $t$ with probability $p<\frac{1}{4k-2}$ conditioned on $\mathcal{F}_t$, meaning that there exists a sequence $(\xi_t)_{t\geq 0}$ of independent $\text{Bernoulli}(p)$ variables, each independent of any $\mathcal{F}_t$ and 
    \begin{equation}\label{include}
    \{\xi_t = 0\} \subseteq \{w_t=0\}
    \end{equation}
    holds for all 
    $t\geq 0$.
\end{assumption}

Assumption \ref{null} implies that the system is not under attack at time $t$ if $\xi_t=0$. 
Note that $k$ can be selected \textit{independently} of the system order $n$, and thus the attack probability can be chosen without dependence on $n$.









\section{Estimation of the Markov Parameter Matrix with the $\ell_1$-norm Estimator}\label{exact recovery}
In this section, we will bound the estimation error of $G_k^*$ using the $\ell_1$-norm estimator.
Let $\hat{G}_k$ denote any estimate obtained from \eqref{l1est}. Equivalently, $\hat{G}_k$ can be expressed as a solution to
\begin{equation}\label{1no}
    \argmin_{G\in \mathbb{R}^{r\times 2km}} \sum_{t=2k-1}^{T+2k-2}\Bigr\|(G_k^*-G)\mathbf{U}_t^{(k)}+v_t + CA^{2k-1}x_{t-2k+1}\Bigr\|_1
\end{equation}
due to the equation given in \eqref{firstsecondthird}, where we use $v_t\in\mathbb{R}^r$ to denote the term given in \eqref{secondline}. 

\subsection{Exact Recovery for a Nilpotent System}\label{4a}

In this subsection, we first assume that $A$ is nilpotent with $A^{2k-1} = 0$; \textit{i.e.} the term in \eqref{thirdline} is zero. This will later be generalized to the case where $A^{2k-1}\neq 0$ in the next subsection. At this stage, we provide sufficient conditions under which $G_k^*$ is the unique solution to the $\ell_1$-norm estimator, assuming $A$ is nilpotent. For the following theorem, 
let $v_{t}^i$ denote the $i^\text{th}$ entry of $v_t$ for $i\in\{1,\dots,r\}$.


\begin{theorem}\label{optim}
    Suppose that $A^{2k-1}=0$. Then, $G_k^*$ is the unique solution to the $\ell_1$-norm estimator \eqref{l1est} if
    \begin{align}
            &\sum_{t=2k-1}^{T+2k-2} \mathbb{I}\{v_t^i = 0\}\cdot |s^T \mathbf{U}_t^{(k)}|> 0, \quad \forall s\in \mathbb{S}^{2km-1}\label{znotationa}
        \end{align}
  holds for all $i\in \{1,\dots,r\}$.
\end{theorem}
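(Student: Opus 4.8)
The plan is to use the separability of the $\ell_1$ objective over the $r$ output coordinates and then reduce the statement to a one-dimensional convexity argument. Under the nilpotent assumption $A^{2k-1}=0$ the term $CA^{2k-1}x_{t-2k+1}$ in \eqref{1no} drops, so the estimator is $\argmin_G \sum_{t}\|(G_k^*-G)\mathbf{U}_t^{(k)}+v_t\|_1$. Writing $g_i^*$ and $g_i$ for the $i$-th rows of $G_k^*$ and $G$, this objective decomposes as $\sum_{i=1}^r \sum_{t=2k-1}^{T+2k-2}\bigl|(g_i^*-g_i)^T\mathbf{U}_t^{(k)}+v_t^i\bigr|$, so it suffices to show that for each fixed $i$ the deviation $\delta:=g_i^*-g_i=0$ is the unique minimizer of the convex function $h_i(\delta):=\sum_{t}\bigl|\delta^T\mathbf{U}_t^{(k)}+v_t^i\bigr|$.

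Next I would compute the one-sided directional derivative of $h_i$ at $\delta=0$ along an arbitrary direction $s\in\mathbb{S}^{2km-1}$. Each summand $|\delta^T\mathbf{U}_t^{(k)}+v_t^i|$ is, as a function of $\delta$, differentiable at $\delta=0$ when $v_t^i\neq 0$ (its argument is nonzero there) and contributes $\text{sgn}(v_t^i)\,(s^T\mathbf{U}_t^{(k)})$, whereas when $v_t^i=0$ the argument vanishes at $\delta=0$ and the directional derivative is the full $|s^T\mathbf{U}_t^{(k)}|$. Hence $h_i'(0;s)=\sum_{t:\,v_t^i=0}|s^T\mathbf{U}_t^{(k)}| + \sum_{t:\,v_t^i\neq 0}\text{sgn}(v_t^i)\,(s^T\mathbf{U}_t^{(k)})$. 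Bounding $\text{sgn}(v_t^i)\,(s^T\mathbf{U}_t^{(k)})\ge -|s^T\mathbf{U}_t^{(k)}|$ on the attacked indices and rewriting with the $\pm 1$ convention for $\mathbb{I}\{\cdot\}$ gives $h_i'(0;s)\ge \sum_{t}\mathbb{I}\{v_t^i=0\}\,|s^T\mathbf{U}_t^{(k)}|$, which is precisely the left-hand side of \eqref{znotationa} and therefore strictly positive.

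Finally I would upgrade ``every directional derivative at $0$ is strictly positive'' to ``$0$ is the unique minimizer of $h_i$'' via convexity: for any $\delta\neq 0$, put $\rho=\|\delta\|_2>0$ and $s=\delta/\rho\in\mathbb{S}^{2km-1}$; the scalar map $\tau\mapsto h_i(\tau s)$ is convex with right derivative $h_i'(0;s)>0$ at $\tau=0$, so the subgradient inequality yields $h_i(\delta)\ge h_i(0)+\rho\,h_i'(0;s)>h_i(0)$. Applying this for every $i$ shows $G=G_k^*$ has finite objective value and strictly beats every other $G$, which gives both optimality and uniqueness. The one place needing care is the bookkeeping of the directional derivative at the nonsmooth point of $|\cdot|$ — specifically verifying that the ``$v_t^i=0$'' indices genuinely contribute the unsigned $|s^T\mathbf{U}_t^{(k)}|$ rather than a signed term, since this is exactly the mechanism by which the attack-free directions dominate the adversarial ones; the remaining steps are routine convex analysis.
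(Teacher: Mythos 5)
Your proposal is correct and follows essentially the same route as the paper: both decompose the $\ell_1$ objective over the $r$ output coordinates, lower-bound the increment (resp.\ directional derivative) at $G_k^*$ by the signed indicator sum via the triangle inequality on the $v_t^i\neq 0$ terms, and invoke convexity to promote the resulting strict local/directional condition to global uniqueness. The only cosmetic difference is that the paper bounds the finite increment on a small ball and cites ``strict local minimum implies unique global minimum,'' whereas you phrase the same step through the one-sided directional derivative and the subgradient inequality.
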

\begin{proof}
Since $A^{2k-1}=0$, an equivalent condition for $G_k^*$ to be the unique solution of the convex optimization problem \eqref{1no} is the existence of some $\epsilon >0$ such that
\begin{align}\label{diff1}
    \nonumber &\sum_{t=2k-1}^{T+2k-2} \|v_t\|_1 < \sum_{t=2k-1}^{T+2k-2} \|\Delta\cdot \mathbf{U}_t^{(k)} + v_t\|_1, \\&\hspace{30mm}\forall \Delta\in\mathbb{R}^{r\times 2km}: 0<\|\Delta\|_F \leq \epsilon,
\end{align}
  since a strict local minimum in convex problems implies the unique global minimum. 
    A sufficient condition for \eqref{diff1} is to satisfy all coordinate-wise inequalities. That is, if there exist $\epsilon_1, \dots, \epsilon_r >0$ such that
    \begin{align}\label{diff2}
    \nonumber&\sum_{t=2k-1}^{T+2k-2} |v_t^i| < \sum_{t=2k-1}^{T+2k-2} |\Delta_i^T \mathbf{U}_t^{(k)} + v_t^i|, \\&\hspace{30mm} \forall \Delta_i\in\mathbb{R}^{2km} : 0<\|\Delta_i\|_2 \leq \epsilon_i
\end{align}
for all $i\in\{1,\dots,r\}$, then the inequality \eqref{diff1} is satisfied. 
Note that we have
\begin{align*}
    |\Delta_i^T \mathbf{U}_t^{(k)} + v_t^i| -|v_t^i|\geq -|\Delta_i^T \mathbf{U}_t^{(k)}|  \numberthis\label{maybe}
\end{align*}
due to the triangle inequality. Substituting \eqref{maybe} into  \eqref{diff2} for $v_t^i \neq 0$ provides the sufficient conditions for \eqref{diff2}:
\begin{align}\label{simplify}
\sum_{t=2k-1}^{T+2k-2}   \mathbb{I}\{v_t^i=0\}\cdot |\Delta_i^T \mathbf{U}_t^{(k)}|>0
\end{align}
for all $0<\|\Delta_i\|_2 \leq \epsilon_i$. 
For every $i$, 
dividing both sides by $\|\Delta_i\|_2 >0$ leads  to
the set of inequalities in \eqref{znotationa}.
\end{proof}


To ensure that $G_k^*$ is the only solution for the $\ell_1$-norm estimator,
it suffices to show that the random variables on the left-hand side of \eqref{znotationa} are sufficiently positive. 
Before we provide the main theorem, the following lemma is useful.
\begin{lemma}\label{lemmafixed}
    Suppose that Assumption \ref{null} holds and $u_t\sim N(0, \gamma^2 I_m)$ for all $t$. For a fixed $s\in\mathbb{S}^{2km-1}$, we have
    \begin{align}\label{ntnttk}
    \nonumber&\mathbb{P}\biggr(\sum_{t=2k-1}^{T+2k-2} \mathbb{I}\{v_t^i = 0\}\cdot |s^T \mathbf{U}_t^{(k)}| \geq \Theta(\gamma (1-2q)T)\biggr) \\&\hspace{25mm}\geq 1-\exp\Bigr(-\Theta\Bigr(\frac{T(1-2q)^2}{k}\Bigr)\Bigr),
    \end{align}
    where $q:=1-(1-p)^{2k-1}.$
\end{lemma}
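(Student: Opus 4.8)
The plan is to decompose the random sum $S:=\sum_{t=2k-1}^{T+2k-2}\mathbb{I}\{v_t^i=0\}\cdot|s^T\mathbf{U}_t^{(k)}|$ into a ``clean'' part and an ``attack-affected'' part, and to control each separately. First I would introduce the indicator of a clean window: for each $t$, let $\chi_t:=\prod_{j=1}^{2k-1}(1-\xi_{t-j})$, which equals $1$ exactly when none of $w_{t-1},\dots,w_{t-2k+1}$ is an attack, and in that case $v_t^i=0$ by Assumption~\ref{null}. Since each $\mathbb{I}\{\cdot\}\in\{-1,1\}$, I can write $\mathbb{I}\{v_t^i=0\}\ge 2\chi_t-1$ pointwise (when $\chi_t=1$ the indicator is $+1$; when $\chi_t=0$ it is at least $-1$), so $S\ge \sum_t (2\chi_t-1)|s^T\mathbf{U}_t^{(k)}| = 2\sum_t\chi_t|s^T\mathbf{U}_t^{(k)}| - \sum_t |s^T\mathbf{U}_t^{(k)}|$. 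Thus it suffices to lower-bound $\sum_t\chi_t|s^T\mathbf{U}_t^{(k)}|$ and upper-bound $\sum_t|s^T\mathbf{U}_t^{(k)}|$, each with high probability.

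The key distributional fact is that since $\|s\|_2=1$ and $u_t\sim N(0,\gamma^2 I_m)$ are independent across $t$, the scalar $Z_t:=s^T\mathbf{U}_t^{(k)}$ is $N(0,\gamma^2)$, so $|Z_t|$ is a (scaled) half-normal with $\mathbb{E}|Z_t|=\Theta(\gamma)$ and sub-Gaussian norm $\Theta(\gamma)$. The $\chi_t$'s depend only on the Bernoulli sequence $(\xi_t)$, which by Assumption~\ref{null} is independent of everything (in particular of the inputs $u_t$), and $\mathbb{E}[\chi_t]=(1-p)^{2k-1}=1-q$. Conditioning on the entire sequence $(\xi_t)$, the variables $\chi_t|Z_t|$ become independent across $t$ with conditional mean $\chi_t\cdot\Theta(\gamma)$; centering (Lemma~\ref{centering}) and summing (Lemma~\ref{independent}) gives that $\sum_t\chi_t|Z_t| - \Theta(\gamma)\sum_t\chi_t$ is sub-Gaussian with norm $\Theta(\gamma\sqrt{T})$. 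A sub-Gaussian tail bound (property~\eqref{minus}, applied conditionally) then yields $\sum_t\chi_t|Z_t|\ge \Theta(\gamma)\sum_t\chi_t - \Theta(\gamma\sqrt{T})$ except with probability $\exp(-\Theta(T(1-2q)^2/k))$ — but I still need $\sum_t\chi_t$ itself to concentrate. That sum is a function of the $(\xi_t)$ with bounded differences: flipping one $\xi_j$ changes $\sum_t\chi_t$ by at most $2k-1$ (it appears in at most $2k-1$ windows), so by McDiarmid's inequality $\sum_t\chi_t\ge (1-q)T - \Theta(\sqrt{k^2\cdot T\log(\cdot)})$, i.e. $\sum_t\chi_t\ge\Theta((1-q)T)$ provided $T\gtrsim k$, with failure probability $\exp(-\Theta(T/k))$ after absorbing the $(1-2q)^2$ factor. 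Similarly the upper bound $\sum_t|Z_t|\le\Theta(\gamma)T + \Theta(\gamma\sqrt{T}) = \Theta(\gamma T)$ holds by the same sub-Gaussian concentration (property~\eqref{plus}). Combining, $S\ge 2\Theta(\gamma)(1-q)T - \Theta(\gamma T)\cdot(1+o(1))$; choosing the constants so that $2(1-q)>1$ is exactly where $q<1/2$ (equivalently $p<\frac{1}{4k-2}$) is used, giving $S\ge\Theta(\gamma(1-2q)T)$.

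The main obstacle I anticipate is getting the tail exponent to come out as $\exp(-\Theta(T(1-2q)^2/k))$ rather than something weaker. The $1/k$ factor arises naturally from the McDiarmid step (bounded differences of size $\Theta(k)$ over $T$ coordinates give exponent $\Theta(T/k)$), but the $(1-2q)^2$ factor needs care: one must track the gap between the target $\Theta(\gamma(1-2q)T)$ and the expected value $\Theta(\gamma(2(1-q)-1)T)=\Theta(\gamma(1-2q)T)$, and ensure the concentration radius is a constant fraction of this gap — which forces the deviation parameter in each tail bound to scale like $(1-2q)\gamma\sqrt{T}$ or $(1-2q)T$, producing the squared factor upon squaring in the sub-Gaussian/McDiarmid exponents. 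A secondary technical point is handling the conditioning cleanly: the $|Z_t|$ concentration must be carried out conditionally on $(\xi_t)$ and on the filtration structure, then combined with the unconditional concentration of $\sum_t\chi_t$ via a union bound over the (at most two) failure events. None of these steps is deep, but bookkeeping the constants so that the final inequality is genuinely of the stated form is the delicate part.
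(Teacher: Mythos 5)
Your overall skeleton is close to the paper's: the pointwise bound $\mathbb{I}\{v_t^i=0\}\ge \mathbb{I}\{\xi_{t-1}=0,\dots,\xi_{t-2k+1}=0\}=2\chi_t-1$ and the resulting expectation $\Theta(\gamma(1-2q)T)$ are exactly the devices the paper uses to lower-bound $\mathbb{E}[S]$. But your concentration step has a genuine gap. You claim that, conditional on $(\xi_t)$, the variables $\chi_t|Z_t|$ are independent across $t$, and then invoke Lemma~\ref{independent} to get sub-Gaussian norm $\Theta(\gamma\sqrt{T})$. This is false: $\mathbf{U}_t^{(k)}$ and $\mathbf{U}_{t+1}^{(k)}$ share $2k-1$ of their $2k$ input blocks, so the $Z_t=s^T\mathbf{U}_t^{(k)}$ are strongly dependent across $t$ and the sum-of-independent-sub-Gaussians lemma does not apply. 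The correct scale is $\Theta(\gamma\sqrt{Tk})$, which is precisely where the $1/k$ in the stated exponent comes from; the paper gets it by treating the entire sum as a function of the full input matrix $\mathbf{U}$, computing its Lipschitz constant $\sqrt{2Tk}$ (the $\sqrt{2k}$ reflecting the window overlap), and applying the Gaussian concentration inequality for Lipschitz functions. Your claimed exponent $\exp(-\Theta(T(1-2q)^2/k))$ for this step is in fact inconsistent with your own claimed norm $\Theta(\gamma\sqrt{T})$, which would give $\exp(-\Theta(T(1-2q)^2))$.

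A second, quantitative problem is the McDiarmid step. With bounded differences of size $2k-1$ over $\Theta(T)$ coordinates, the exponent for a deviation of $\Theta((1-2q)T)$ is $\Theta\bigl((1-2q)^2T^2/(Tk^2)\bigr)=\Theta(T(1-2q)^2/k^2)$, not $\Theta(T(1-2q)^2/k)$ as you assert; recovering the $1/k$ rate would require, e.g., splitting $t$ into $2k-1$ residue classes modulo $2k-1$ (within which the $\chi_t$ are genuinely independent) and applying Hoeffding per class. More importantly, this whole step is unnecessary: the paper never concentrates $\sum_t\chi_t$ separately. It only needs the \emph{expectation} bound $\mathbb{E}[S]\ge\gamma(1-2q)T\sqrt{2/\pi}$ (where your $2\chi_t-1$ identity is used inside the expectation, exploiting independence of $(\xi_t)$ from $\mathbf{U}$), and then one-sided Gaussian--Lipschitz concentration of $S$ around that mean. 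Replacing your two-part concentration argument with that single Lipschitz-function argument both closes the independence gap and delivers the stated rate.
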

\begin{proof}
    The proof is provided in Appendix \ref{lemma3proof}.
\end{proof}

Lemma \ref{lemmafixed} presents a lower bound for a fixed $s$.
Note that $q<0.5$ holds  under Assumption \ref{null} since $p<\frac{1}{4k-2}$, which implies that the lower bound $\Theta(\gamma (1-2q)T)$ is positive.
 To ensure that the same order of the  lower bound uniformly holds for all $s\in\mathbb{S}^{2km-1}$, the following lemma analyzes the difference in the quantity evaluated at two different points.  

\begin{lemma}\label{lemmadiff}
 Suppose that $u_t\sim N(0, \gamma^2 I_m)$ for all $t$.    Given $\delta\in(0,1]$, when $T\geq \Theta(\frac{1}{m}\log(\frac{1}{\delta}))$, the inequality
   \begin{align*}
   &\sum_{t=2k-1}^{T+2k-2}\mathbb{I}\{v_t^i = 0\}\cdot |s^T \mathbf{U}_t^{(k)}| - \sum_{t=2k-1}^{T+2k-2}\mathbb{I}\{v_t^i = 0\}\cdot |\tilde s^T \mathbf{U}_t^{(k)}| \\&\hspace{40mm}\geq -\Theta(T\|s-\tilde s\|_2 \cdot  \gamma \sqrt{km})
   \end{align*}
   holds for all $s,\tilde s\in\mathbb{S}^{2km-1}$ with probability at least $1-\frac{\delta}{2}$.
\end{lemma}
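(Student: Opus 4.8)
The plan is to strip the $(s,\tilde s)$-dependence off the randomness deterministically, so that uniformity over $\mathbb{S}^{2km-1}$ comes for free and no $\varepsilon$-net over the sphere is needed. Write $f(s):=\sum_{t=2k-1}^{T+2k-2}\mathbb{I}\{v_t^i=0\}\,|s^T\mathbf{U}_t^{(k)}|$. Since $\mathbb{I}\{v_t^i=0\}\in\{-1,+1\}$, the reverse triangle inequality $\bigl||a|-|b|\bigr|\le|a-b|$ applied with $a=s^T\mathbf{U}_t^{(k)}$, $b=\tilde s^T\mathbf{U}_t^{(k)}$, followed by Cauchy--Schwarz, gives for every realization
\[
f(s)-f(\tilde s)\ \ge\ -\sum_{t=2k-1}^{T+2k-2}\bigl|(s-\tilde s)^T\mathbf{U}_t^{(k)}\bigr|\ \ge\ -\|s-\tilde s\|_2\sum_{t=2k-1}^{T+2k-2}\|\mathbf{U}_t^{(k)}\|_2 .
\]
Hence it suffices to show that $R:=\sum_{t=2k-1}^{T+2k-2}\|\mathbf{U}_t^{(k)}\|_2\le\Theta(T\gamma\sqrt{km})$ with probability at least $1-\tfrac{\delta}{2}$ under the stated sample bound, after which the claim follows for all $s,\tilde s\in\mathbb{S}^{2km-1}$ on that single event.

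For this second step I would \emph{not} bound $\|\mathbf{U}_t^{(k)}\|_2\le\sum_{j=0}^{2k-1}\|u_{t-j}\|_2$, which would cost a spurious factor $\sqrt{k}$; instead I apply Cauchy--Schwarz over the $T$ time indices, $R\le\sqrt{T}\,\bigl(\sum_{t}\|\mathbf{U}_t^{(k)}\|_2^2\bigr)^{1/2}$, and then use $\|\mathbf{U}_t^{(k)}\|_2^2=\sum_{j=0}^{2k-1}\|u_{t-j}\|_2^2$ together with the fact that each input vector $u_\tau$ enters at most $\min(2k,T)$ of the $T$ windows, so that $\sum_{t}\|\mathbf{U}_t^{(k)}\|_2^2\le\min(2k,T)\sum_{\tau=0}^{T+2k-2}\|u_\tau\|_2^2$. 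The random variable $S:=\sum_{\tau=0}^{T+2k-2}\|u_\tau\|_2^2$ equals $\gamma^2$ times a $\chi^2$ variable with $d=(T+2k-1)m$ degrees of freedom, so the standard tail bound $\mathbb{P}(\chi^2_d\ge d+2\sqrt{dx}+2x)\le e^{-x}$ with $x=\log(2/\delta)$, combined with $d\ge Tm\ge\Theta(\log(1/\delta))$ by the hypothesis on $T$ (so that $d$ dominates $x$), gives $S\le\Theta(d\gamma^2)=\Theta((T+k)m\gamma^2)$ with probability at least $1-\tfrac{\delta}{2}$. Since $\min(2k,T)\cdot(T+k)=\Theta(kT)$ in every regime, this yields $R^2\le T\cdot\min(2k,T)\cdot\Theta((T+k)m\gamma^2)=\Theta(T^2km\gamma^2)$, i.e.\ $R\le\Theta(T\gamma\sqrt{km})$; substituting into the display of the first paragraph completes the proof.

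The main obstacle is purely the bookkeeping of the $k$-exponent: the crude triangle inequality over the $2k$ blocks of $\mathbf{U}_t^{(k)}$, or a union bound over an $\varepsilon$-net of the sphere, would each overshoot the target by a factor $\sqrt{k}$, so the argument must route the overlapping blocks through a Cauchy--Schwarz in the time index and exploit that each input reappears in at most $\min(2k,T)$ windows, the $\min$ being what covers the regime in which $T$ is of order $k$ or smaller. A secondary point deserving care is that the $\chi^2$ concentration must be written in the form where $\delta$ enters only through an additive $\log(1/\delta)$ term, so that the degree-of-freedom count $(T+2k-1)m$ that dominates it is exactly what forces the requirement $T\ge\Theta(\tfrac1m\log\tfrac1\delta)$. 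The resulting uniform bound is deliberately looser than the pointwise expectation $\Theta(T\gamma\|s-\tilde s\|_2)$ of the left-hand side, but it is obtained with no covering argument and under this mild sample requirement, which is precisely what will be needed when the lemma is combined with the fixed-$s$ estimate of Lemma~\ref{lemmafixed}.
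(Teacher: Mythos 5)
Your proof is correct, and it follows the paper's skeleton for the first half: the same deterministic reduction via the reverse triangle inequality and Cauchy--Schwarz to the single random quantity $R=\sum_{t}\|\mathbf{U}_t^{(k)}\|_2$, after which uniformity over all pairs $(s,\tilde s)$ is automatic on one event. Where you diverge is in how $R$ is controlled. The paper observes that $\mathbf{U}\mapsto\sum_t\|\mathbf{U}_t^{(k)}\|_2$ is Lipschitz with constant $\sqrt{2Tk}$ (reusing the computation from the proof of Lemma~\ref{lemmafixed}), applies the Gaussian concentration inequality for Lipschitz functions to get a deviation bound with failure probability $\exp(-\Theta(Tm))$, and adds the mean bound $\mathbb{E}\bigl[\sum_t\|\mathbf{U}_t^{(k)}\|_2\bigr]\le\Theta(T\gamma\sqrt{km})$. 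You instead apply Cauchy--Schwarz over the time index, exploit that each $u_\tau$ lies in at most $\min(2k,T)$ of the $T$ windows to get $\sum_t\|\mathbf{U}_t^{(k)}\|_2^2\le\min(2k,T)\sum_\tau\|u_\tau\|_2^2$, and finish with a $\chi^2$ tail bound on $\sum_\tau\|u_\tau\|_2^2$; the identity $\min(2k,T)\cdot(T+k)=\Theta(kT)$ then recovers $R\le\Theta(T\gamma\sqrt{km})$, and your degrees-of-freedom condition $d\gtrsim\log(1/\delta)$ reproduces the same sample requirement $T\ge\Theta(\frac{1}{m}\log\frac{1}{\delta})$. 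Your route is more elementary (no measure concentration for Lipschitz functions of Gaussians, only a $\chi^2$ tail), makes the overlapping-window bookkeeping explicit, and is a bit more careful in the regime $T\lesssim k$; the paper's route is shorter because the Lipschitz constant has already been computed. Both yield the identical bound, and your observation that the naive per-block triangle inequality or a sphere covering would each lose a factor $\sqrt{k}$ correctly identifies why the lemma is stated this way. One cosmetic note: you correctly account for the paper's unusual convention that $\mathbb{I}\{\cdot\}\in\{-1,+1\}$, which is needed for the first inequality to hold termwise.
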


\begin{proof}
    The proof can be found in Appendix \ref{lemma4proof}.
\end{proof}


The following theorem proves that the sufficient condition \eqref{znotationa} is indeed satisfied even in the presence of arbitrary and possibly adversarial attacks. 

\begin{theorem}\label{exactrecov}
    Suppose that Assumption \ref{null} holds. Let $u_t\sim N(0, \gamma^2 I_m)$ for all $t$.  Given $\delta\in(0,1]$, when 
    \begin{equation}\label{realtimebound}
        T\geq \Theta\left(\frac{k}{(1-2q)^2}\left[km\log\Bigr(\frac{km}{1-2q}\Bigr)+\log\Bigr(\frac{r}{\delta}\Bigr)\right]\right),
    \end{equation}
    we have 
    \begin{align}
        &\nonumber\sum_{t=2k-1}^{T+2k-2}\mathbb{I}\{v_t^i = 0\}\cdot |s^T \mathbf{U}_t^{(k)}| \geq \Theta(\gamma (1-2q)T)>0, \\&\hspace{33mm}\forall s\in\mathbb{S}^{2km-1}, \forall i\in\{1,\dots,r\}
    \end{align}
    with probability at least $1-\delta$, where  $q=1-(1-p)^{2k-1}$. 
\end{theorem}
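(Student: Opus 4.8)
The plan is to combine Lemmas \ref{lemmafixed} and \ref{lemmadiff} via a standard $\epsilon$-net argument over the sphere $\mathbb{S}^{2km-1}$, and then take a union bound over the $r$ coordinates. First I would fix $i\in\{1,\dots,r\}$ and choose an $\epsilon$-net $\mathcal{N}$ of $\mathbb{S}^{2km-1}$ with $\epsilon$ to be calibrated later; a standard volumetric bound gives $|\mathcal{N}|\leq (3/\epsilon)^{2km}$. Applying Lemma \ref{lemmafixed} to each $s\in\mathcal{N}$ and union-bounding, with probability at least $1-|\mathcal{N}|\exp(-\Theta(T(1-2q)^2/k))$ we have $\sum_t \mathbb{I}\{v_t^i=0\}\,|s^T\mathbf{U}_t^{(k)}| \geq \Theta(\gamma(1-2q)T)$ simultaneously for all net points.

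Next I would transfer this lower bound from the net to the whole sphere using Lemma \ref{lemmadiff}: for arbitrary $s\in\mathbb{S}^{2km-1}$, pick $\tilde s\in\mathcal{N}$ with $\|s-\tilde s\|_2\leq\epsilon$, and write the sum at $s$ as (sum at $\tilde s$) plus (difference), where the difference is at least $-\Theta(T\epsilon\gamma\sqrt{km})$ on the event from Lemma \ref{lemmadiff} (which holds with probability $\geq 1-\delta/(2r)$ after rescaling $\delta$ for the union bound). Choosing $\epsilon = \Theta\bigl((1-2q)/\sqrt{km}\bigr)$ makes the net-approximation error $\Theta(T\epsilon\gamma\sqrt{km}) = \Theta(\gamma(1-2q)T)$ a small constant fraction of the main term, so the net bound survives the transfer: $\sum_t \mathbb{I}\{v_t^i=0\}\,|s^T\mathbf{U}_t^{(k)}| \geq \Theta(\gamma(1-2q)T)$ uniformly over the sphere.

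It then remains to check that the failure probability is at most $\delta/r$ per coordinate. The net-cardinality term is $|\mathcal{N}|\exp(-\Theta(T(1-2q)^2/k)) \leq \exp\bigl(2km\log(3/\epsilon) - \Theta(T(1-2q)^2/k)\bigr)$; with the chosen $\epsilon$, $\log(3/\epsilon) = \Theta(\log(km/(1-2q)))$, so this is $\leq \delta/(2r)$ precisely when $T \geq \Theta\bigl(\tfrac{k}{(1-2q)^2}[km\log(km/(1-2q)) + \log(r/\delta)]\bigr)$, which is the stated sample complexity \eqref{realtimebound} (and this also dominates the mild requirement $T\geq\Theta(\tfrac1m\log(r/\delta))$ needed to invoke Lemma \ref{lemmadiff}). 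Combining the two per-coordinate events and union-bounding over $i\in\{1,\dots,r\}$ gives the claim with probability at least $1-\delta$, and positivity of the lower bound follows from $q<1/2$ under Assumption \ref{null}, as already noted after Lemma \ref{lemmafixed}.

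The main obstacle is the bookkeeping in the calibration of $\epsilon$: it must be small enough that the perturbation term from Lemma \ref{lemmadiff} is dominated by the main term $\Theta(\gamma(1-2q)T)$, yet not so small that $\log(1/\epsilon)$ inflates the net cardinality beyond what the exponent $\Theta(T(1-2q)^2/k)$ can absorb within the budget \eqref{realtimebound}. The choice $\epsilon = \Theta((1-2q)/\sqrt{km})$ threads this needle, and one should double-check that the hidden constants in the two lemmas are compatible (e.g., that the constant multiplying the perturbation bound can be made, say, less than half the constant in the main lower bound by an appropriate absolute choice of the $\Theta$-constant in $\epsilon$). Everything else is a routine union bound.
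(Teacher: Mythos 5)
Your proposal is correct and follows essentially the same route as the paper's proof: a union bound of Lemma \ref{lemmafixed} over an $\epsilon$-net of cardinality $\Theta((1+2/\epsilon)^{2km})$ with $\epsilon=\Theta((1-2q)/\sqrt{km})$, transfer to the full sphere via Lemma \ref{lemmadiff} so the perturbation is a constant fraction of the main term, and a final union bound over the $r$ coordinates yielding \eqref{realtimebound}. The calibration of $\epsilon$ and the resulting sample complexity match the paper exactly.
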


\begin{proof}
By Lemma \ref{lemmafixed}, when
\begin{equation}\label{timebound}
    T\geq \Theta\Bigr(\frac{k}{(1-2q)^2}\log\Bigr(\frac{1}{\delta}\Bigr)\Bigr),
\end{equation}
the inequality
\begin{equation}\label{firsteqq}
\sum_{t=2k-1}^{T+2k-2}\mathbb{I}\{v_t^i = 0\}\cdot |s^T \mathbf{U}_t^{(k)}|\geq\Theta(\gamma (1-2q)T) >0
\end{equation}
holds for a fixed $s$ with probability at least $1-\frac{\delta}{2}$.

To obtain a positive lower bound on $\mathbb{I}\{v_t^i = 0\}\cdot |s^T \mathbf{U}_t^{(k)}|$ for all $s\in\mathbb{S}^{2km-1}$, we use a lemma from \cite{vershynin2025high} stating that one can select an $\epsilon$-net $\mathcal{N}_\epsilon$ consisting of 
 $(1+\frac{2}{\epsilon})^{2km}$  points such that for every $\tilde{s}\in\mathbb{S}^{2km-1}$, there exists $s\in\mathcal{N}_\epsilon$ satisfying $\|s-\tilde s\|\leq \epsilon$.
 
 We use $\epsilon^* = \Theta(\frac{1-2q}{\sqrt{km}})$. From Lemma \ref{lemmadiff}, for all $s, \tilde s \in \mathbb{S}^{2km-1}$ satisfying $\|s-\tilde s\|_2 \leq \epsilon^*$, we have 
 \begin{align}\label{secondeqq}
 \nonumber &\sum_{t=2k-1}^{T+2k-2}\mathbb{I}\{v_t^i = 0\}\cdot |s^T \mathbf{U}_t^{(k)}| - \sum_{t=2k-1}^{T+2k-2} \mathbb{I}\{v_t^i = 0\}\cdot |\tilde s^T \mathbf{U}_t^{(k)}| \\&\hspace{40mm}\geq -\frac{1}{2}\Theta(\gamma(1-2q)T).
 \end{align}
 with probability at least $1-\frac{\delta}{2}$, with the time \eqref{timebound}.
Considering \eqref{firsteqq} and \eqref{secondeqq}, it suffices to select $\Theta((1+\frac{2\sqrt{km}}{1-2q})^{2km})$ points  $s$ satisfying \eqref{firsteqq} with probability at least $1-\frac{\delta}{2\cdot \Theta((1+\frac{2\sqrt{km}}{1-2q})^{2km})}$ to guarantee that 
\begin{align}\label{conclusion}
    &\sum_{t=2k-1}^{T+2k-2}\mathbb{I}\{v_t^i = 0\}\cdot |s^T \mathbf{U}_t^{(k)}|\geq\frac{1}{2}\Theta(\gamma (1-2q)T) , 
\end{align}
for all $s\in\mathbb{S}^{2km-1}$
holds with probability at least $1-\delta$. Thus, we replace $\delta$ in \eqref{timebound} with $\frac{\delta}{\Theta((1+\frac{2\sqrt{km}}{1-2q})^{2km})}$ to arrive at 
\begin{equation}\label{conclusion2}
    T\geq  \Theta\biggr(\frac{k}{(1-2q)^2}\biggr[km\log\Bigr(\frac{km}{1-2q}\Bigr)+ \log\Bigr(\frac{1}{\delta}\Bigr)\biggr]\biggr).
\end{equation}
 Finally, to satisfy \eqref{conclusion} for all $i\in\{1,\dots,r\}$, we substitute $\frac{\delta}{r}$ for $\delta$ in \eqref{conclusion2} to obtain \eqref{realtimebound}.
\end{proof}

\begin{remark}
Theorem \ref{exactrecov} implies that \eqref{znotationa} indeed holds, ensuring that $G_k^*$ is the only solution to the $\ell_1$-norm estimator under the assumption that $A^{2k-1} = 0$. Each attack can be chosen in a fully adversarial manner without any assumption on its expectation. The exact recovery of $G_k^*$ is guaranteed when the attack probability satisfies $p<\frac{1}{4k-2}$, which represents a scenario where attacks of large magnitude may occasionally occur in arbitrary directions. 

\end{remark}

\subsection{Estimation Error for a General System}\label{4c}

In the previous subsection, we have discussed that under
the assumption $A^{2k-1} = 0$, the estimation error to obtain the true Markov parameter matrix $G_k^*$ is exactly zero after finite time.  However, in general, this exact recovery cannot be achieved since the term $CA^{2k-1}x_{t-2k+1}$ in \eqref{thirdline} remains nonzero at all times, given that exponential decay does not cause the term to vanish to zero.
 In this section, we derive an estimation error bound  when $A^{2k-1}\neq 0$. It turns out that the error is proportional to $\|A^{2k-1}\|_2$ (since it is assumed that $\|A\|_2<1$, this exponential term is expected to be small).
Before presenting the main theorem, the following lemma is helpful to bound the sum of state norms.

\begin{lemma}\label{sumxt}
    Suppose that Assumptions \ref{stability} and \ref{maxnorm} hold. Let $u_t\sim N(0, \gamma^2 I_m)$ for all $t$. Given $\delta\in(0,1]$, when $T\geq \Theta(\log(\frac{1}{\delta }))$,
    \begin{equation}
        \sum_{t=0}^{T-1} \|x_t\|_2 \leq \Theta\Bigr(\frac{(\eta+\gamma \sqrt{m}\cdot \|B\|_2)T}{1-\|A\|_2}\Bigr)
    \end{equation}
    holds with probability at least $1-\delta$. 
\end{lemma}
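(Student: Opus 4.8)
\textbf{Proof proposal for Lemma \ref{sumxt}.}

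The plan is to unroll the state recursion \eqref{sysd} from time $0$ and bound each accumulated term separately, then sum over $t$. Writing $x_t = A^t x_0 + \sum_{j=0}^{t-1} A^{t-1-j}(Bu_j + w_j)$, the triangle inequality together with submultiplicativity of $\|\cdot\|_2$ gives
\[
\|x_t\|_2 \leq \|A\|_2^t \|x_0\|_2 + \sum_{j=0}^{t-1} \|A\|_2^{t-1-j}\bigl(\|B\|_2 \|u_j\|_2 + \|w_j\|_2\bigr).
\]
Summing over $t=0,\dots,T-1$ and exchanging the order of summation, each term $\|B\|_2\|u_j\|_2 + \|w_j\|_2$ is multiplied by $\sum_{i\geq 0}\|A\|_2^i = \frac{1}{1-\|A\|_2}$ (using Assumption \ref{stability}), and the $x_0$ contribution is $\frac{\|x_0\|_2}{1-\|A\|_2}$. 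So it remains to show that $\|x_0\|_2 + \sum_{j=0}^{T-1}\bigl(\|B\|_2\|u_j\|_2 + \|w_j\|_2\bigr) \leq \Theta\bigl((\eta + \gamma\sqrt{m}\,\|B\|_2)T\bigr)$ with probability at least $1-\delta$.

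The three pieces are handled by concentration of sums of sub-Gaussian norms. For the inputs, $u_j \sim N(0,\gamma^2 I_m)$ independently, so by Lemma \ref{independent} each $\|u_j\|_2$ has sub-Gaussian norm $\Theta(\gamma\sqrt m)$; applying the centering Lemma \ref{centering} to $\|u_j\|_2 - \mathbb{E}\|u_j\|_2$ and then Lemma \ref{independent} again to the independent centered terms, the sum $\sum_{j=0}^{T-1}(\|u_j\|_2 - \mathbb{E}\|u_j\|_2)$ is sub-Gaussian with norm $\Theta(\gamma\sqrt{mT})$, so by the tail bound \eqref{secondprop} it is $\Theta(\gamma\sqrt{mT}\sqrt{\log(1/\delta)})$ with probability $1-\Theta(\delta)$; combined with the mean term $\sum_j \mathbb{E}\|u_j\|_2 = \Theta(\gamma\sqrt m\,T)$, and absorbing the $\sqrt{T\log(1/\delta)}$ fluctuation into $\Theta(T)$ once $T\geq\Theta(\log(1/\delta))$, we get $\sum_j\|u_j\|_2 = \Theta(\gamma\sqrt m\, T)$. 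For the attacks $w_j$, the subtlety is that they are only conditionally sub-Gaussian given $\mathcal{F}_j$ (Assumption \ref{maxnorm}) and need not be independent or mean zero; here I would pass to $\|w_j\|_2 - \mathbb{E}[\|w_j\|_2 \mid \mathcal{F}_j]$, which forms a martingale difference sequence whose increments have conditional $\psi_2$-norm $\Theta(\eta)$, and apply a sub-Gaussian Azuma/Freedman-type inequality to get $\sum_j \|w_j\|_2 \leq \Theta(\eta T) + \Theta(\eta\sqrt{T\log(1/\delta)}) = \Theta(\eta T)$ for $T\geq\Theta(\log(1/\delta))$. The term $\|x_0\|_2 \leq \Theta(\eta\sqrt{\log(1/\delta)}) \leq \Theta(\eta T)$ is a single sub-Gaussian tail via \eqref{secondprop}. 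A union bound over the (constantly many) events costs only a constant factor in $\delta$.

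The main obstacle is the $w_j$ term: because the attacks are adversarial, conditionally sub-Gaussian, and not independent, one cannot invoke Lemma \ref{independent} directly, and the naive bound $\sum_j\|w_j\|_2 \leq \eta T$ only holds in expectation. The clean route is the martingale-difference argument above, which exploits exactly the filtration structure built into Assumption \ref{maxnorm} — the conditional sub-Gaussianity of $w_j$ given $\mathcal{F}_j$ is precisely what licenses the conditional MGF bound needed for an Azuma-type tail inequality. (Alternatively, one can absorb the conditional-mean part $\sum_j \mathbb{E}[\|w_j\|_2\mid\mathcal{F}_j]\leq \Theta(\eta T)$ deterministically and only concentrate the martingale part.) Everything else is routine: a geometric-series bound from Assumption \ref{stability} and standard sub-Gaussian concentration, with the $T\geq\Theta(\log(1/\delta))$ hypothesis ensuring the $O(\sqrt{T\log(1/\delta)})$ fluctuations are dominated by the $O(T)$ main terms.
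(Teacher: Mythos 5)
Your proposal follows essentially the same route as the paper: unroll the recursion, pull out the geometric series $\sum_i \|A\|_2^i = \frac{1}{1-\|A\|_2}$ via Assumption \ref{stability}, and then show that $\|x_0\|_2 + \sum_t(\|w_t\|_2 + \|B\|_2\|u_t\|_2)$ concentrates at $\Theta((\eta+\gamma\sqrt{m}\|B\|_2)T)$ using sub-Gaussian tail bounds, with $T\geq\Theta(\log(1/\delta))$ absorbing the fluctuation. Your explicit martingale-difference treatment of the $\|w_j\|_2$ terms is in fact a more careful rendering of the step the paper compresses into the phrase ``using the filtration $\mathcal{F}_t$ and Lemma \ref{centering},'' so the argument is correct and, if anything, slightly more rigorous at that point.
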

\begin{proof}
    The proof details are given in Appendix \ref{lemma5proof}.
\end{proof}

\begin{theorem}\label{esterror}
Suppose that Assumptions \ref{stability}, \ref{maxnorm}, and \ref{null} hold. Let $u_t\sim N(0, \gamma^2 I_m)$ for all $t$.  Define $q:=1-(1-p)^{2k-1}<0.5$.  
Let $\hat G_k$ be any solution to the $\ell_1$-norm estimator \eqref{l1est} and $G_k^*$ be the true Markov parameter matrix. 
Given $\delta\in(0,1]$, after the finite time in \eqref{realtimebound}, we have
\begin{equation*}
    \|G_k^* - \hat{G}_k\|_F \leq \Theta\biggr(\frac{\|A^{2k-1}\|_2\|C\|_2\sqrt{r} }{(1-\|A\|_2)(1-2q)}\cdot\Bigr(\frac{\eta}{\gamma}+\sqrt{m}\|B\|_2\Bigr)\biggr)
\end{equation*}
with probability at least $1-\delta$.
\end{theorem}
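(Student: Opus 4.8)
The plan is to combine the optimality of $\hat{G}_k$ in \eqref{1no} (tested against the feasible point $G_k^*$) with the uniform lower bound of Theorem \ref{exactrecov} and the state-norm estimate of Lemma \ref{sumxt}. Write $\Delta := G_k^* - \hat{G}_k$ and abbreviate $e_t := CA^{2k-1}x_{t-2k+1}$, the residual term \eqref{thirdline} which is now nonzero. Since $\hat{G}_k$ minimizes \eqref{1no} and $G_k^*$ is feasible, comparing objective values gives
\begin{equation*}
\sum_{t=2k-1}^{T+2k-2}\|\Delta\mathbf{U}_t^{(k)}+v_t+e_t\|_1 \;\le\; \sum_{t=2k-1}^{T+2k-2}\|v_t+e_t\|_1 .
\end{equation*}

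First I would expand each $\ell_1$-norm coordinate-wise and, for every row $i\in\{1,\dots,r\}$, split the time sum according to $\mathcal{T}_0^i := \{t : v_t^i = 0\}$. On $t\in\mathcal{T}_0^i$ apply $|\Delta_i^T\mathbf{U}_t^{(k)}+e_t^i|\ge |\Delta_i^T\mathbf{U}_t^{(k)}| - |e_t^i|$, and on $t\notin\mathcal{T}_0^i$ apply $|\Delta_i^T\mathbf{U}_t^{(k)}+v_t^i+e_t^i|\ge |v_t^i+e_t^i| - |\Delta_i^T\mathbf{U}_t^{(k)}|$. Substituting these into the displayed inequality, the attacked-time contributions $\sum_{t\notin\mathcal{T}_0^i}|v_t^i+e_t^i|$ cancel with the corresponding portion of the right-hand side (note $v_t^i=0$ on $\mathcal{T}_0^i$), and one is left with
\begin{align*}
\sum_{i=1}^r\sum_{t=2k-1}^{T+2k-2}\mathbb{I}\{v_t^i=0\}\,|\Delta_i^T\mathbf{U}_t^{(k)}| &\le 2\sum_{i=1}^r\sum_{t\in\mathcal{T}_0^i}|e_t^i| \\ &\le 2\sum_{t=2k-1}^{T+2k-2}\|e_t\|_1 ,
\end{align*}
where $\mathbb{I}\{v_t^i=0\}\in\{+1,-1\}$ merges the clean and attacked times. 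Writing $\Delta_i=\|\Delta_i\|_2\,s_i$ with $s_i\in\mathbb{S}^{2km-1}$ when $\Delta_i\ne 0$ (rows with $\Delta_i=0$ contribute zero to both sides), Theorem \ref{exactrecov} — whose conclusion, after the time \eqref{realtimebound}, lower-bounds $\sum_t\mathbb{I}\{v_t^i=0\}|s^T\mathbf{U}_t^{(k)}|$ by $\Theta(\gamma(1-2q)T)$ \emph{uniformly} over $s\in\mathbb{S}^{2km-1}$ and all $i$ — converts the left-hand side into $\Theta(\gamma(1-2q)T)\sum_i\|\Delta_i\|_2$. Thus $\sum_i\|\Delta_i\|_2 \le \Theta\!\big(\tfrac{1}{\gamma(1-2q)T}\big)\sum_t\|e_t\|_1$, and $\|\Delta\|_F\le\sum_i\|\Delta_i\|_2$.

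It then remains to bound the residual sum. I would use $\|e_t\|_1\le\sqrt{r}\,\|e_t\|_2\le\sqrt{r}\,\|C\|_2\|A^{2k-1}\|_2\|x_{t-2k+1}\|_2$ together with Lemma \ref{sumxt}, observing that $\sum_{t=2k-1}^{T+2k-2}\|x_{t-2k+1}\|_2 = \sum_{s=0}^{T-1}\|x_s\|_2 \le \Theta\!\big(\tfrac{(\eta+\gamma\sqrt{m}\|B\|_2)T}{1-\|A\|_2}\big)$, which holds once $T\ge\Theta(\log(1/\delta))$, a requirement implied by \eqref{realtimebound}. Substituting, the factor $T$ cancels and dividing by $\gamma$ yields precisely
\begin{equation*}
\|G_k^* - \hat{G}_k\|_F \le \Theta\!\left(\frac{\|A^{2k-1}\|_2\|C\|_2\sqrt{r}}{(1-\|A\|_2)(1-2q)}\Big(\frac{\eta}{\gamma}+\sqrt{m}\|B\|_2\Big)\right).
\end{equation*}
Applying Theorem \ref{exactrecov} and Lemma \ref{sumxt} each with confidence $1-\delta/2$ and taking a union bound gives the stated probability $1-\delta$ (the time \eqref{realtimebound} absorbs the factor-$2$ change of constants).

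The step needing the most care is the coordinate-wise reverse-triangle manipulation: one must verify that after the clean/attacked split, every attacked-time term and the whole right-hand side $\sum_t\|v_t+e_t\|_1$ cancel, leaving on the left only $\sum_i\sum_t\mathbb{I}\{v_t^i=0\}|\Delta_i^T\mathbf{U}_t^{(k)}|$ and on the right a residual sum that does not depend on the direction of $\Delta$. A second point to watch is that it is exactly the \emph{uniform}-in-$s$ guarantee of Theorem \ref{exactrecov} that licenses substituting the data-dependent direction $s_i=\Delta_i/\|\Delta_i\|_2$; once this is in hand, the rest is deterministic bookkeeping, the probabilistic content being fully carried by Theorem \ref{exactrecov} and Lemma \ref{sumxt}.
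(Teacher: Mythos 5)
Your proposal is correct and follows essentially the same route as the paper's proof: compare objective values at $\hat G_k$ and $G_k^*$, use the reverse triangle inequality coordinate-wise to reduce to $\sum_i\sum_t \mathbb{I}\{v_t^i=0\}\,|\Delta_i^T\mathbf{U}_t^{(k)}| \le 2\sum_t\|CA^{2k-1}x_{t-2k+1}\|_1$, then invoke the uniform-in-$s$ bound of Theorem \ref{exactrecov} and Lemma \ref{sumxt} and note $\|\Delta\|_F\le\sum_i\|\Delta_i\|_2$. The only cosmetic difference is that you keep the residual $e_t$ inside the coordinate-wise split (obtaining the factor $2$ from the cancellation) whereas the paper first peels it off with two vector-level triangle inequalities; the two derivations are equivalent.
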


\begin{proof}
After the finite time in \eqref{realtimebound}, 
 we have
\begin{align*}
    \sum_{t=2k-1}^{T+2k-2} |\Delta_i^T &\mathbf{U}_t^{(k)} + v_t^i| - |v_t^i|\geq \sum_{t=2k-1}^{T+2k-2} \mathbb{I}\{v_t^i = 0\} |\Delta_i^T \mathbf{U}_t^{(k)}| \\&=\|\Delta_i\|_2\sum_{t=2k-1}^{T+2k-2} \mathbb{I}\{v_t^i = 0\}\cdot \left|\Bigr(\frac{\Delta_i}{\|\Delta_i\|_2}\Bigr)^T \mathbf{U}_t^{(k)}\right|\\&\geq \|\Delta_i\|_2 \cdot \Theta(\gamma (1-2q)T) ,\numberthis\label{lobo}
\end{align*}
for every $i\in\{1,\dots,r\}$, where the first inequality is from  the relationship between \eqref{diff2} and \eqref{simplify}, and the last inequality leverages Theorem \ref{exactrecov}.

Meanwhile, note that the optimality of $\hat{G}_k$ in \eqref{1no} induces
\begin{align*}
&\sum_{t=2k-1}^{T+2k-2} \|(G_k^*-\hat G_k)\mathbf{U}_t^{(k)} + v_t\|_1 -\|CA^{2k-1}x_{t-2k+1}\|_1
\\&\hspace{7mm}\leq\sum_{t=2k-1}^{T+2k-2} \|(G_k^*-\hat G_k)\mathbf{U}_t^{(k)} + v_t +CA^{2k-1}x_{t-2k+1}\|_1 \\&\hspace{7mm}\leq \sum_{t=2k-1}^{T+2k-2} \| v_t +CA^{2k-1}x_{t-2k+1}\|_1 \\&\hspace{7mm}\leq \sum_{t=2k-1}^{T+2k-2} \| v_t\|_1 +\sum_{t=0}^{T-1}\|CA^{2k-1}x_{t}\|_1,
\end{align*}
where the first and third inequalities are due to the triangle inequality. 
For $i\in\{1,\dots,r\}$, let $g_i^*$ and $\hat g_i$ denote the $i^\text{th}$ rows of $G_k^*$ and $\hat G_k$, respectively.
Then, we have
\begin{align*}
\sum_{i=1}^r &f_i(g_i^* - \hat g_i) = 
\sum_{t=2k-1}^{T+2k-2} \|(G_k^*-\hat G_k)\mathbf{U}_t^{(k)} + v_t\|_1-\|v_t\|_1\\& \leq \sum_{t=0}^{T-1} 2\|CA^{2k-1}x_{t}\|_1\leq \sum_{t=0}^{T-1} 2\sqrt{r}\|CA^{2k-1}x_{t}\|_2,\numberthis\label{2r2norm}
\end{align*}
where the right-hand side is upper-bounded 
using Lemma \ref{sumxt} and the left-hand side is lower-bounded with \eqref{lobo}. Consequently, it follows from \eqref{2r2norm} that
\begin{align*}
    \sum_{i=1}^r \|g_i^* - &\hat g_i\|_2 \cdot \Theta(\gamma (1-2q)T)\\&\leq 2\sqrt{r} \|C\|_2 \|A^{2k-1}\|_2\Theta\Bigr(\frac{(\eta+\gamma \sqrt{m}\cdot \|B\|_2)T}{1-\|A\|_2}\Bigr).
\end{align*}
The relationship $\|G_k^*-\hat G_k\|_F \leq \sum_{i=1}^r \|g_i^* - \hat g_i\|_2$ completes the proof.
\end{proof}

\begin{remark}
    The estimation error bound in Theorem \ref{esterror} is $ \Theta(\|A^{2k-1}\|_2)$, which implies that small $\|A\|_2$ and large $k$ reduce the estimation error. 
    Large $k$ is beneficial in the sense that one can recover up to $A^{(k)}, B^{(k)}, C^{(k)}$ given in Definition \ref{dorder}.
    However, it is worth noting that the attack probability in Assumption \ref{null} is restricted to $p<\frac{1}{4k-2}$ to guarantee the proposed error. Thus, to ensure the practicality of our scenario,  $k$ cannot be chosen arbitrarily large for the recovery of $A^{(k)}, B^{(k)}, C^{(k)}$; rather, the desired degree of recovery should be specified in advance.
\end{remark}
\begin{remark}\label{gotostab}
    The term $\frac{\|A^{2k-1}\|_2}{1-\|A\|_2}$ in the error bound comes from Assumption \ref{stability} when bounding the quantity $\sum_{i=0}^\infty\|A^i\|_2$.
    The assumption can actually be relaxed to the general system stability assumption $\rho(A) < 1$, where $\rho(A)$ is the maximum absolute eigenvalue of $A$. This follows from Gelfand's formula, which establishes a finite upper bound of $\Phi(A)=\sup_{\tau\geq 0} \frac{\|A^\tau\|_2}{\rho(A)^{\tau}}$, which only depends on the system order $n$. In that case, the aforementioned error bound scales as  $\frac{\rho(A)^{2k-1}}{1-\rho(A)}$ multiplied by a factor depending only on $n$. 
\end{remark}

\section{Retrieving the True System from the \\Markov Parameter Matrix}\label{retrievesys}
In this section, we use an estimated Markov parameter matrix obtained in Section \ref{exact recovery} to recover $A, B, C, D$, which define the true system. In particular, we will provide an analysis on the $k$-order balanced truncation, where we leverage the result of the work \cite{sarkar2021finite}. Before presenting the theorem, we adopt the estimates of a $k$-order model based on the Ho-Kalman algorithm \cite{hokalman1966kalman}.

\begin{definition}[Estimates for $k$-order truncated model]
One can construct  $\mathcal{H}_{0,k}$ from $G^*_k$ (see Definitions \ref{hankeldef} and \ref{markov}). Similarly, 
we alternatively construct $\mathcal{ {\hat H}}_{0,k}$,  each block matrix of which comes from a solution $\hat G_k$ to the $\ell_1$-norm estimator \eqref{l1est}. 
    For the estimate of $D$, we denote $\hat D^{(k)}$ as the first $r\times m$ submatrix of $\hat G_k$. Now, recall the  balanced truncated model from Definition \ref{dorder} and let 
   $ \hat U_k, \hat \Sigma_k, \hat V_k $ be the singular value decomposition (SVD) of the zero-padded matrix $(\mathcal{\bar {\hat H}}_{0,k})_{[1:rk+r], [1:mk]}$. Then, the estimates for $(A,B,C)$ are derived as 
   \begin{align*}
        &\hat C^{(k)} = (\hat U_k\hat \Sigma_k^{1/2})_{[1:r], [1:k]}, \quad \hat B^{(k)} = (\hat \Sigma_k^{1/2} \hat V_k^T)_{[1:k], [1:m]} \\ & \hat A^{(k)} = (\hat U_k\hat \Sigma_k^{1/2})_{[1:rk], [1:k]}^{\dagger} (\hat U_k\hat \Sigma_k^{1/2})_{[r+1:rk+r], [1:k]}.
    \end{align*}
    Note that we have truncated $\mathcal{\bar {\hat H}}_{0,k}$ up to $rk+r$ rows and $mk$ columns, since all milder truncations also yield the same mathematical result. However, fewer rows/columns can be truncated for the sake of numerical stability. 
\end{definition}

We leverage the following lemma that bounds the estimation error of $A, B, C$ from that of the full Hankel matrix (see Proposition 14.2 in \cite{sarkar2021finite}). 

\begin{lemma}\label{sarkar1}
    For $d\in\{1,\dots,k\}$, 
    consider a positive constant $\epsilon_d$ such that
    $\|\mathcal{H}_{0,\infty} -\mathcal{\bar {\hat H}}_{0,d} \|_2 \leq \epsilon_d$. Then, there exists an orthogonal matrix $Q_d \in \mathbb{R}^{n\times n}$ such that 
    \begin{align*}
        &\max\{\|C^{(d)}-\hat C^{(d)} Q_d\|_2, \|B^{(d)}-Q_d^{-1}\hat B^{(d)}\|_2\} \leq \Theta\Bigr(\frac{d\epsilon_d}{\sqrt{\hat \sigma_d}}\Bigr), \\& \|A^{(d)}-Q_d^{-1} \hat A^{(d)} Q_d\|_2\leq \Theta\Bigr(\frac{d\epsilon_d\cdot \|A\|_2}{\hat \sigma_d}\Bigr),
    \end{align*}
    where $\hat\sigma_d$ denotes the $d^\text{th}$ largest singular value of $(\mathcal{\bar {\hat H}}_{0,k})_{[1:rd+r], [1:md]}$.
\end{lemma}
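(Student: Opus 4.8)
The plan is to obtain the statement from the robust Ho--Kalman perturbation analysis of \cite{sarkar2021finite} (their Proposition~14.2), so the work lies in verifying that our objects play the roles required there and in tracing where each factor in the bounds comes from. Factor the true infinite Hankel matrix as $\mathcal{H}_{0,\infty} = \mathcal{O}\,\mathcal{C}$ with observability factor $\mathcal{O} = [C^T~(CA)^T~(CA^2)^T~\cdots]^T$ and controllability factor $\mathcal{C} = [B~AB~A^2B~\cdots]$; note $\mathcal{H}_{0,\infty}$ has rank at most $n$. By Definition~\ref{dorder}, $C^{(d)}, B^{(d)}, A^{(d)}$ are read off the rank-$d$ truncated SVD $U_d\Sigma_d V_d^T$ of $\mathcal{H}_{0,\infty}$, whereas $\hat C^{(d)}, \hat B^{(d)}, \hat A^{(d)}$ are read off the rank-$d$ truncated SVD of the finite leading submatrix of $\mathcal{\bar{\hat H}}$ used in the order-$d$ Ho--Kalman construction. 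Since the spectral norm of any submatrix is at most that of the ambient matrix, the hypothesis $\|\mathcal{H}_{0,\infty} - \mathcal{\bar{\hat H}}_{0,d}\|_2 \le \epsilon_d$ transfers to every finite block actually used, in particular to the shifted $(d{+}1)$-by-$d$ block matrix needed to recover $A^{(d)}$.

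First I would pin down the perturbed singular structure. Weyl's inequality gives $|\sigma_d - \hat\sigma_d|\le \epsilon_d$, so $\hat\sigma_d$ is a valid surrogate for $\sigma_d$ in all denominators; Wedin's $\sin\Theta$ theorem then produces an orthogonal $Q_d$ simultaneously aligning the rank-$d$ left and right singular subspaces (which play symmetric roles in a balanced realization), with rotation error of order $\Theta(\epsilon_d/\hat\sigma_d)$. Feeding this into the standard perturbation estimate for balanced square-root factors yields $\|U_d\Sigma_d^{1/2} - \hat U_d\hat\Sigma_d^{1/2}Q_d\|_2 \le \Theta(\sqrt{d}\,\epsilon_d/\sqrt{\hat\sigma_d})$ and the analogous bound for $\Sigma_d^{1/2}V_d^T$, where the $\sqrt{d}$ accounts for passing from a single block to all $d$ coordinates and the $1/\sqrt{\hat\sigma_d}$ reflects the ill-conditioning of extracting a square-root factor near the truncation level. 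Since $C^{(d)}$ and $B^{(d)}$ are the leading $r\times d$ and $d\times m$ submatrices of these factors (and likewise for the hatted versions), restricting the inequalities gives the first displayed bound.

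Next I would treat $A^{(d)}$ through the shift-invariance identity $(\mathcal{O}_d)_{\uparrow} = (\mathcal{O}_d)_{\downarrow}\,A^{(d)}$, where $(\cdot)_{\downarrow}$ and $(\cdot)_{\uparrow}$ delete the last and first block rows of the order-$d$ observability factor; this is precisely the normal equation defining $\hat A^{(d)} = \big((\hat U_d\hat\Sigma_d^{1/2})_{\downarrow}\big)^{\dagger}(\hat U_d\hat\Sigma_d^{1/2})_{\uparrow}$. The error in $A^{(d)}$ inherits two contributions: perturbation of the pseudoinverse of $(U_d\Sigma_d^{1/2})_{\downarrow}$, whose smallest nonzero singular value is of order $\sqrt{\hat\sigma_d}$, which after multiplication by $\|(U_d\Sigma_d^{1/2})_{\uparrow}\|_2$ contributes $\Theta(\sqrt{d}\,\epsilon_d/\hat\sigma_d)$; and perturbation of $(U_d\Sigma_d^{1/2})_{\uparrow}$ itself, which after left-multiplication by a pseudoinverse of norm $\Theta(1/\sqrt{\hat\sigma_d})$ contributes $\Theta(\sqrt{d}\,\epsilon_d/\sqrt{\hat\sigma_d})$. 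The extra $\|A\|_2$ factor enters because $\|(U_d\Sigma_d^{1/2})_{\uparrow}\|_2 = \|(\mathcal{O}_d)_{\downarrow}A^{(d)}\|_2 \le \|A\|_2\cdot\Theta(\sqrt{\sigma_1})$, so the dominant contribution becomes $\Theta(d\,\epsilon_d\|A\|_2/\hat\sigma_d)$; conjugating the shift identity by $Q_d$ keeps the same orthogonal matrix, which yields the second displayed bound.

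The main obstacle is bookkeeping rather than a new idea: one must ensure a single orthogonal $Q_d$ realizes all three bounds simultaneously (forcing the reuse of the Wedin rotation of the same singular-subspace pair throughout), and one must reconcile the zero-padding of $\mathcal{\bar{\hat H}}_{0,d}$ with the shifted finite Hankel blocks that Ho--Kalman requires --- in particular verifying that the only potentially missing corner block (which arises solely when $d=k$, since $CA^{2k-1}B$ is not among the entries of $G_k^*$) is harmless because it is already absorbed into the gap measured by $\epsilon_d$. Once these alignments are in place, the three inequalities are exactly Proposition~14.2 of \cite{sarkar2021finite} instantiated with true Hankel $\mathcal{H}_{0,\infty}$, estimate $\mathcal{\bar{\hat H}}_{0,d}$, and perturbation level $\epsilon_d$.
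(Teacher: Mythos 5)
The paper offers no proof of this lemma at all: it is imported verbatim as Proposition~14.2 of \cite{sarkar2021finite}, which is exactly where your argument also lands. Your added sketch of the underlying Weyl/Wedin perturbation analysis and the shift-invariance recovery of $A^{(d)}$ is a reasonable account of why that proposition holds, so the proposal is consistent with (and slightly more detailed than) the paper's treatment.
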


\begin{theorem}\label{retrieve}
    Suppose that Assumptions \ref{stability}, \ref{maxnorm}, and \ref{null} hold. Let $u_t\sim N(0, \gamma^2 I_m)$ for all $t$. Define $q:=1-(1-p)^{2k-1} < 0.5$.  
    Given $\delta\in(0,1]$, after the finite time in \eqref{realtimebound}, 
    there exists an orthogonal matrix $Q_k\in\mathbb{R}^{n\times n}$  such that 
    \begin{align*}
        &\|D-\hat{D}^{(k)}\|_F \leq \Theta\biggr(\frac{\sqrt{r}\|A^{2k-1}\|_2 \cdot \nu}{1-\|A\|_2}\biggr),\\[8pt]
        & \max\{\|C^{(k)}-\hat C^{(k)} Q_k\|_2, \|B^{(k)}-Q_k^{-1}\hat B^{(k)}\|_2\} \\&\hspace{15mm}\leq \Theta\biggr(\frac{\max\{\|A^{k}\|_2,\sqrt{kr}\|A^{2k-1}\|_2 \}\cdot k\nu }{\sqrt{\hat \sigma_k}(1-\|A\|_2)}\biggr), \\[8pt]
        & \|A^{(k)}-Q_k^{-1} \hat A^{(k)} Q_k\|_2   
    \\&\hspace{15mm}\leq \Theta\biggr(\frac{\max\{\|A^{k}\|_2,\sqrt{kr}\|A^{2k-1}\|_2 \}\cdot k\nu }{\hat \sigma_k(1-\|A\|_2)/\|A\|_2}\biggr)
    \end{align*}
     with probability at least $1-\delta$, where $\hat\sigma_k$ denotes the $k^\text{th}$ largest singular value of $(\mathcal{\bar {\hat H}}_{0,k})_{[1:rk+r], [1:mk]}$ and $\nu = \frac{\|C\|_2}{1-2q}\cdot \Bigr(\frac{\eta}{\gamma}+\sqrt{m}\|B\|_2\Bigr).$ 
\end{theorem}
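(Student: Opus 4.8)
The plan is to bound the perturbation $\|\mathcal{H}_{0,\infty} - \mathcal{\bar{\hat H}}_{0,k}\|_2$ in terms of known quantities and then invoke Lemma \ref{sarkar1} with an appropriate choice of $\epsilon_k$. First I would split the error into two pieces via the triangle inequality:
\begin{align*}
\|\mathcal{H}_{0,\infty} - \mathcal{\bar{\hat H}}_{0,k}\|_2 \leq \|\mathcal{H}_{0,\infty} - \mathcal{\bar H}_{0,k}\|_2 + \|\mathcal{\bar H}_{0,k} - \mathcal{\bar{\hat H}}_{0,k}\|_2.
\end{align*}
The first term is a deterministic \emph{truncation} error: it measures how much of the true infinite Hankel matrix is discarded by keeping only the $k\times k$ block. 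Since every entry of $\mathcal{H}_{0,\infty}$ beyond the retained block is of the form $CA^{j}B$ with $j \geq k$ (the bottom-right corner being governed by exponents up to $2k-1$ and beyond), this term is $\Theta(\|A^{k}\|_2 \|B\|_2 \|C\|_2 /(1-\|A\|_2))$ after summing the geometric tail; more care is needed on exactly which power of $A$ dominates, and I expect it to come out as $\Theta(\|A^{k}\|_2)$ up to the $\|B\|_2,\|C\|_2,(1-\|A\|_2)^{-1}$ factors. The second term is the \emph{estimation} error: $\mathcal{\bar H}_{0,k} - \mathcal{\bar{\hat H}}_{0,k}$ is built blockwise from $G_k^* - \hat G_k$, so its spectral norm is at most $\Theta(\sqrt{k}\,\|G_k^* - \hat G_k\|_F)$ (a $k\times k$ block arrangement inflates the Frobenius-to-spectral passage by a $\sqrt{k}$ factor at worst), and Theorem \ref{esterror} bounds $\|G_k^* - \hat G_k\|_F$ by $\Theta(\|A^{2k-1}\|_2 \|C\|_2 \sqrt{r}\,(\eta/\gamma + \sqrt{m}\|B\|_2)/((1-\|A\|_2)(1-2q)))$. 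Combining, $\epsilon_k = \Theta(\max\{\|A^{k}\|_2, \sqrt{kr}\,\|A^{2k-1}\|_2\}\cdot \nu/(1-\|A\|_2))$ up to the stated constants, which already matches the numerators appearing in the theorem.

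Next I would feed this $\epsilon_k$ into Lemma \ref{sarkar1} with $d = k$: this directly yields the claimed bounds on $\|C^{(k)} - \hat C^{(k)}Q_k\|_2$, $\|B^{(k)} - Q_k^{-1}\hat B^{(k)}\|_2$ (each $\Theta(k\epsilon_k/\sqrt{\hat\sigma_k})$) and $\|A^{(k)} - Q_k^{-1}\hat A^{(k)}Q_k\|_2$ ($\Theta(k\epsilon_k \|A\|_2/\hat\sigma_k)$), with the same orthogonal $Q_k$. For the $D$ estimate, recall that $\hat D^{(k)}$ is the first $r\times m$ block of $\hat G_k$ and $D$ is the first block of $G_k^*$, so $\|D - \hat D^{(k)}\|_F \leq \|G_k^* - \hat G_k\|_F$, and Theorem \ref{esterror} gives exactly $\Theta(\sqrt{r}\,\|A^{2k-1}\|_2 \nu/(1-\|A\|_2))$ after substituting $\nu$. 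All of these hold on the same probability-$(1-\delta)$ event on which Theorem \ref{esterror} holds (and on which, implicitly, Lemma \ref{sumxt} holds), since Lemma \ref{sarkar1} is deterministic given the Hankel perturbation bound.

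The main obstacle I anticipate is the bookkeeping in the truncation term $\|\mathcal{H}_{0,\infty} - \mathcal{\bar H}_{0,k}\|_2$: one must argue carefully that zero-padding $\mathcal{H}_{0,k}$ to match the dimensions of $\mathcal{H}_{0,\infty}$ leaves a residual whose nonzero blocks are all of the form $CA^{j}B$ with $j$ large, and then bound the spectral norm of that block-banded residual. A clean way is to write the residual as a sum over diagonals, bound each diagonal's spectral norm by $\|C\|_2\|A\|_2^j\|B\|_2$ times a combinatorial factor, and sum the geometric series; the slight subtlety is that the residual includes both the discarded tail of the leading block \emph{and} everything outside it, so the dominant power is $\|A^k\|_2$ rather than $\|A^{2k-1}\|_2$, which is why the $\max$ of the two appears. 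A secondary point worth stating explicitly is that $\hat\sigma_k$ — the $k$th singular value of the \emph{estimated} truncated Hankel matrix — appears in the denominators rather than the true $\sigma_k$; this is exactly as in Lemma \ref{sarkar1}, so no extra work is needed, but it should be flagged since it makes the bound data-dependent. Everything else is routine substitution of the bounds from Theorems \ref{exactrecov}--\ref{esterror} and Lemma \ref{sumxt}.
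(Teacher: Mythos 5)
Your proposal is correct and follows essentially the same route as the paper: the same triangle-inequality split of $\|\mathcal{H}_{0,\infty}-\mathcal{\bar{\hat H}}_{0,k}\|_2$ into a deterministic truncation term of order $\|A^k\|_2\|B\|_2\|C\|_2/(1-\|A\|_2)$ and a blockwise estimation term of order $\sqrt{k}\,\|G_k^*-\hat G_k\|_F$, followed by Lemma \ref{sarkar1} and the observation that $\hat D^{(k)}$ is the leading block of $\hat G_k$. The only cosmetic difference is that the paper handles the truncation term by factoring $\mathcal{H}_{k,\infty}$ as (observability matrix)$\cdot A^k\cdot$(controllability matrix) rather than your diagonal-summation sketch, but both yield the same $\Theta(\|A^k\|_2)$ bound.
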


\begin{proof}
By Theorem \ref{esterror}, we directly have $\|G_k^* - \hat{G}_k\|_F$. 
Since $D$ and $\hat D^{(k)}$ are the first $r\times m$ submatrix of $G_k^*$ and $\hat G_k$ respectively, we have $\|D-\hat D^{(k)}\|_F \leq \|G_k^* - \hat G_k\|_F$. 

For estimation errors for $\hat A^{(k)}, \hat B^{(k)}, \hat C^{(k)}$, observe that 
\begin{equation}\label{tri}
    \|\mathcal{H}_{0,\infty} -\mathcal{\bar {\hat H}}_{0,k} \|_2\leq \|\mathcal{H}_{0,\infty} -\mathcal{\bar { H}}_{0,k} \|_2+\|  \mathcal{\bar H}_{0,k} - \mathcal{\bar {\hat H}}_{0,k}\|_2.
\end{equation}
For the first term, note that 
\begin{equation*}
\|\mathcal{H}_{0,\infty} -\mathcal{\bar { H}}_{0,k} \|_2=\begin{bmatrix}
    0 & H_{12}\\ H_{21}&H_{22}
\end{bmatrix},
\end{equation*}
where $\begin{bmatrix}
    H_{21}&H_{22}
\end{bmatrix}=\begin{bmatrix}
    H_{12}\\H_{22}
\end{bmatrix}=\mathcal{H}_{k,\infty}$.
Considering that the squared spectral norm of a matrix is bounded by the sum of the squared spectral norms of its submatrices, we have 
\begin{align*}
    \|&\mathcal{H}_{0,\infty} -\mathcal{\bar { H}}_{0,k} \|_2 \leq (\|\begin{bmatrix}
    H_{21}&H_{22}
\end{bmatrix}\|_2^2 + \|H_{12}\|_2^2)^{1/2}\\&\hspace{3mm}\leq 
\sqrt{2} \|\mathcal{H}_{k,\infty}\|_2 = \sqrt{2}\left\|
\begin{bmatrix}
    C \\CA \\\vdots
\end{bmatrix}A^k
\begin{bmatrix}
    B &AB&\cdots
\end{bmatrix}
\right\|_2\\&\hspace{3mm}\leq \sqrt{2} \bigr(\sum_{i=0}^\infty \|CA^{i}\|_2^2\bigr)^{1/2}\cdot \|A^k\|_2 \cdot \bigr(\sum_{i=0}^\infty \|A^{i}B\|_2^2\bigr)^{1/2}\\&\hspace{3mm}\leq \frac{\sqrt{2} \|C\|_2  \|A^k\|_2\|B\|_2 }{1-\|A\|_2^2}< \frac{\sqrt{2} \|C\|_2  \|A^k\|_2\|B\|_2 }{1-\|A\|_2}.\numberthis\label{firstbound}
\end{align*}
For the second term, note that each block matrix consisting of rows $(i-1)r+1$ to $ir$ of $  \mathcal{\bar H}_{0,k} - \mathcal{\bar {\hat H}}_{0,k}$ for $i=1, \dots, k$ is a submatrix of $G_k^* - \hat G_k$ by the construction of Hankel matrices. 
Thus, we have
\begin{equation}\label{secondbound}
    \|\mathcal{\bar H}_{0,k} - \mathcal{\bar {\hat H}}_{0,k}\|_2 \leq \sqrt{k} \|G_k^* - \hat G_k\|_2\leq \sqrt{k} \|G_k^* - \hat G_k\|_F.
\end{equation}
Substituting \eqref{firstbound} and \eqref{secondbound} into \eqref{tri} yields the bound 
\begin{align*}
    \|\mathcal{H}_{0,\infty} -\mathcal{\bar {\hat H}}_{0,k} \|_2\leq \Theta\biggr(\frac{\max\{\|A^{k}\|_2,\sqrt{kr}\|A^{2k-1}\|_2 \} \cdot \nu}{1-\|A\|_2}\biggr),
\end{align*} 
where $\nu$ is the constant specified in the theorem. This result is then followed by Lemma \ref{sarkar1} to complete the proof.
\end{proof}

\begin{remark}\label{remark4}
In Theorem \ref{retrieve}, we established the estimation error of the $k$-order balanced truncation model. In light of Lemma \ref{sarkar1}, it is possible to retrieve the estimates of all $d$-order balanced models (see Definition \ref{dorder}) for any $d\in\{1,\dots, k\}. $ Specifically, by replacing $k$ with any $d$ in all the steps throughout the theorems, the error bound can be modified accordingly to reflect $d$ instead of $k$.
    The estimation error for the Hankel matrix in the theorem turns out to be $\Theta(\max\{\|A^d\|_2, d\|A^{2d-1}\|_2\})$, which is $\Theta(\|A^d\|_2)$ for a sufficiently large $d$. 
    This implies that as $d$ grows, the estimation error may not initially decrease for small $d$ but eventually experiences an exponential decay. 
\end{remark}
\section{Numerical Experiments} \label{sec: experiments}

To effectively demonstrate the results of this paper, we will provide two examples in this section. 

\begin{example}\label{support2}
In this example, we illustrate the results in Section \ref{exact recovery}, showing that the $\ell_1$-norm estimator indeed recovers the Markov parameter matrix unlike the classical least-squares method in the presence of arbitrary attacks. We 
use $n=300$, $m=6$, $r=9$, and $k=5~\text{or}~10$. We generate two different matrices: a nilpotent $A$ is constructed by assigning $i$ to the $i^\text{th}$ superdiagonal entry, while every $(2k-1)^\text{th}$ superdiagonal entry and all other entries are zero, and a general $A$ is constructed by selecting all entries from Uniform$[-1,1]$. These matrices are then scaled to satisfy $\|A\|_2 = 0.6$. 
The initial state is set to a vector of $1000$s, the control inputs at each time is designed to follow $N(0, 100I_{m})$, and the attack time probability is set to $p=\frac{1}{4k}$ to satisfy  Assumption \ref{null}. The attack $w_t$ is Gaussian with covariance $25I_{n}$ and a mean vector whose entries are either $300$ or $1000$, depending on the sign of the corresponding coordinate of $x_t$. Figure \ref{ex2} shows the estimation error on a log scale over time, where the least-squares method fails to recover the Markov parameter matrix, resulting in an error of at least $10^3$. In contrast, the $\ell_1$-norm estimator yields an  error of zero for the nilpotent $A$ for both $k=5$ and $10$. For the general $A$, one can observe that a larger $k$ results in an error approaching that of the nilpotent case, although a longer time is required for the convergence.
  This strongly supports Theorem \ref{esterror} and the corresponding required time given in \eqref{timebound}.
\end{example}

\begin{figure}[t]
     \centering
     \begin{subfigure}[b]{0.235\textwidth}
         \centering
    \includegraphics[width=\textwidth]{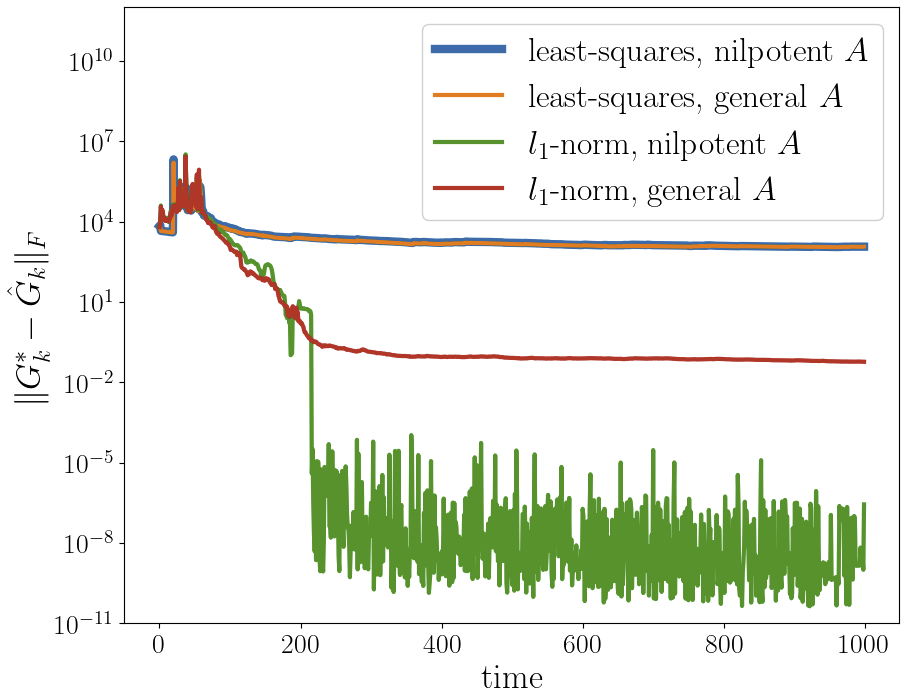}
         \caption{$k=5$}
         \label{p45}
     \end{subfigure}
     \begin{subfigure}[b]{0.235\textwidth}
         \centering
    \includegraphics[width=\textwidth]{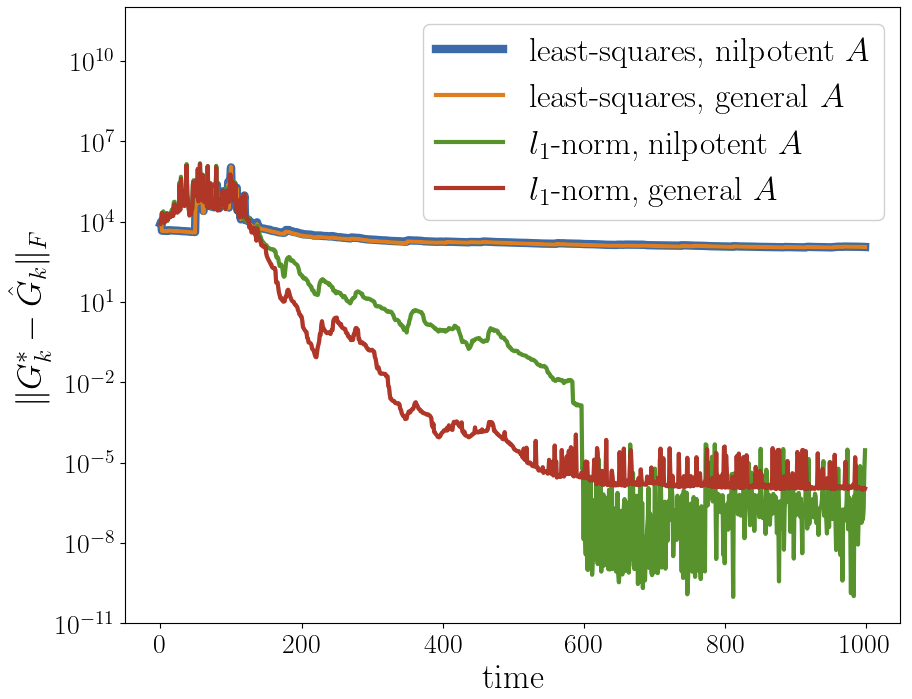}
         \caption{$k=10$}
         \label{p47}
     \end{subfigure}
        \caption{Estimation error for the Markov parameter matrix: $\ell_1$-norm estimator vs. least-squares under adversarial attacks.}
        \label{ex2}
\end{figure}
\begin{figure}[t]
     \centering
     \begin{subfigure}[b]{0.235\textwidth}
         \centering
    \includegraphics[width=\textwidth]{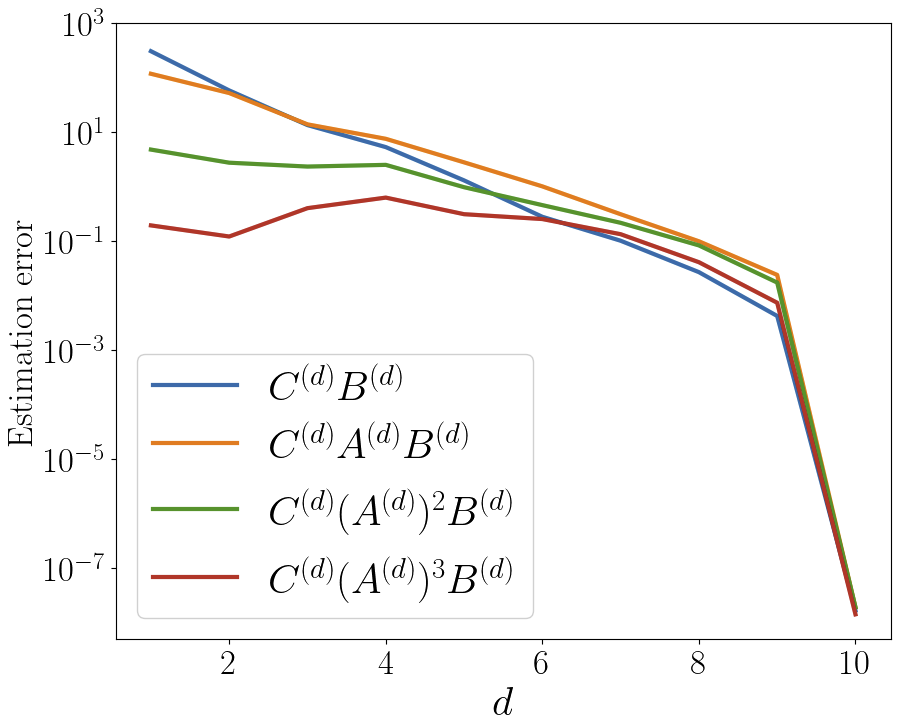}
         \caption{Nilpotent system}
         \label{nil}
     \end{subfigure}
     \begin{subfigure}[b]{0.235\textwidth}
         \centering
    \includegraphics[width=\textwidth]{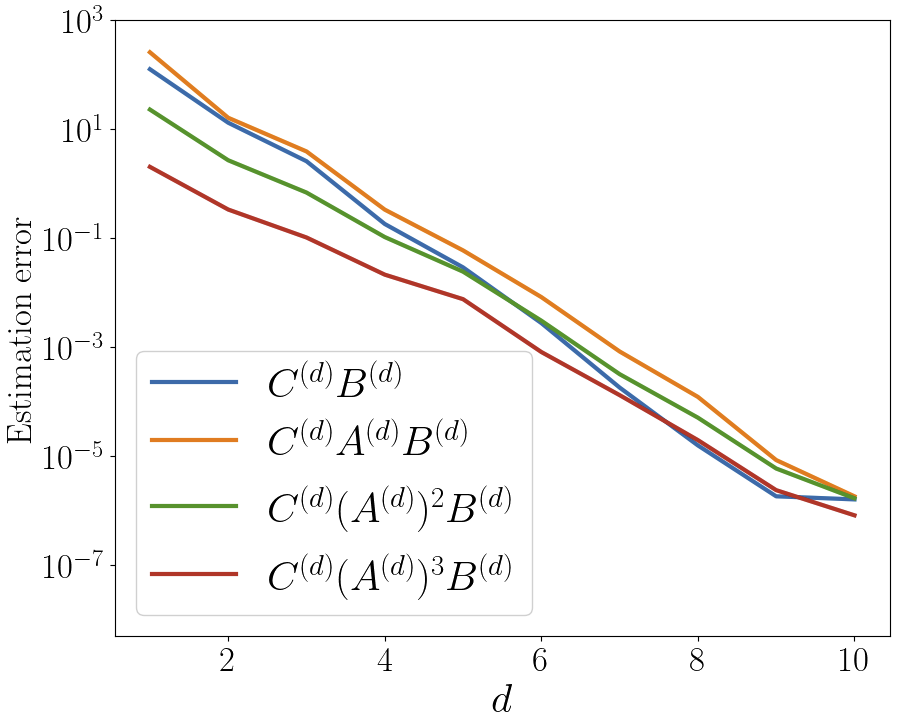}
         \caption{General system}
         \label{gen}
     \end{subfigure}
        \caption{Estimation error for the $d$-order balanced truncated model for $d\in\{1,\dots,k\}$ under adversarial attacks.}
        \label{ex3}
\end{figure}

\begin{example}\label{support3}
 In this example, we demonstrate the recovery of $d$-order balanced truncated models for $d\in\{1,\dots,k\}$ to illustrate the results of Section \ref{retrievesys}.  Due to the existence of infinitely many systems within similarity transformation, we cannot verify whether the estimations of $A^{(d)},B^{(d)},C^{(d)}$ match the true balanced truncation matrices. Thus, we first retrieve  $\hat  A^{(d)} ,\hat  B^{(d)}, \hat  C^{(d)}$ from a reasonable estimate of the Markov parameter matrix at a fixed time, followed by computing $\|C^{(d)}(A^{(d)})^{i}B^{(d)}-\hat C^{(d)} (\hat{A}^{(d)})^i \hat B^{(d)}\|_2$. Figure \ref{ex3} shows this estimation error on a logarithmic scale for $i=0,1,2,3$ and $d\in\{1,\dots, 10\}$, where we adopt the same setting as in Example \ref{support2} with $k=10$. We select time $700$ since Figure \ref{p47} indicates that the estimation error for the Markov parameter matrix has stabilized by this point. One can observe that the nilpotent system naturally shows a lower estimation error than the general system. More importantly, both systems validate the expositions in Remark \ref{remark4}, showing that the estimation error decays exponentially as $d$ increases. 
\end{example}

\section{Conclusion} \label{sec: conclusion}

In this paper, we design the $\ell_1$-norm estimator in terms of control inputs and observations to estimate the Markov parameter matrix. With the goal of obtaining balanced truncated models of the system up to order $k$, we prove that the estimation error is exactly zero for nilpotent systems
and decays exponentially in $k$ for general systems when $p<\frac{1}{4k-2}$. This exponentially decaying error carries over to the estimation error for the balanced truncations of the true system.
This work presents the first result in the literature demonstrating the possibility of accurately learning systems from partial observations under adversarial attacks.

\appendix

\subsection{Proof of Lemma \ref{lemmafixed}}\label{lemma3proof}

\begin{proof}
Let $f(\mathbf{U}) = \sum_{t=2k-1}^{T+2k-2} \mathbb{I}\{v_t^i = 0\}\cdot |s^T \mathbf{U}_t^{(k)}| $, where $\mathbf{U} = [u_0 ~ u_1 ~\cdots u_{T+2k-2}]$. Similarly, let $\mathbf{U}' = [u_0' ~ u_1' ~\cdots u_{T+2k-2}']$. For $\mathbf{U}, \mathbf{U}'$, we have 
\begin{align*}
    &\hspace{-2mm}|f(\mathbf{U}) - f(\mathbf{U}')| \\&\hspace{-2mm}\leq \sum_{t=2k-1}^{T+2k-2}
        \| s^T[(u_t-u_t') ~ \dots ~ (u_{t-2k+1}-u_{t-2k+1}')]\|_2 \\ 
        &\hspace{-2mm}\leq  \sqrt{T} \sqrt{\sum_{t=2k-1}^{T+2k-2}
        \|[(u_t-u_t') ~ \dots ~ (u_{t-2k+1}-u_{t-2k+1}')]\|_2^2 }\\&\hspace{-2mm}\leq  \sqrt{T} \sqrt{2k}\sqrt{\sum_{t=0}^{T+2k-2}\|u_t - u_t'\|_2^2} \leq  \sqrt{2Tk} \|\mathbf{U} - \mathbf{U}'\|_2 \numberthis\label{lipschitzbound}
\end{align*}
due to the Cauchy-Schwarz inequality. This implies that $f$ has a Lipschitz constant of $\sqrt{2Tk}$. 
By the Gaussian concentration inequality (Theorem 5.2.3. in \cite{vershynin2025high}), we have $\|f(\mathbf{U}) - \mathbb{E}[f(\mathbf{U})] \|_{\psi_2} \leq \Theta(\gamma L)$, where $L$ denotes the Lipschitz constant.  Applying this inequality with the sub-Gaussian property \eqref{minus}, we arrive at 
\begin{align*}
    &\mathbb{P}\Biggr(\sum_{t=2k-1}^{T+2k-2} \mathbb{I}\{v_t^i = 0\}\cdot |s^T \mathbf{U}_t^{(k)}| -\mathbb{E}\left[\mathbb{I}\{v_t^i = 0\}\cdot |s^T \mathbf{U}_t^{(k)}|\right] \\&\hspace{10mm}\geq -\frac{\gamma (1-2q)T}{\sqrt{2\pi}} \Biggr) \geq 1-\exp\Bigr(-\Theta\Bigr(\frac{T(1-2q)^2}{k}\Bigr)\Bigr).
\end{align*}
Now, it suffices to show that \[
\mathbb{E}\left[\sum_{t=2k-1}^{T+2k-2}\mathbb{I}\{v_t^i = 0\}\cdot |s^T \mathbf{U}_t^{(k)}|\right] \geq \gamma (1-2q)T\sqrt{\frac{2}{\pi}}.\]
Note that for all $i$, the event $\{v_t^i=0\}$ includes the event $\{w_{t-1}= 0,\cdots, w_{t-2k+1}= 0\}$ (see \eqref{secondline}). Thus, we have
\begin{align*}
    &\mathbb{E}\left[\sum_{t=2k-1}^{T+2k-2}\mathbb{I}\{v_t^i = 0\}\cdot |s^T \mathbf{U}_t^{(k)}|\right]\\&\hspace{5mm}\geq T\cdot \mathbb{E}\Bigr[\mathbb{I}\{w_{t-1}= 0 ,\cdots, w_{t-2k+1}= 0\}\cdot |s^T \mathbf{U}_t^{(k)}|\Bigr]\\&\hspace{5mm}\geq T\cdot \mathbb{E}\left[\mathbb{I}\{\xi_{t-1}= 0 ,\cdots, \xi_{t-2k+1}= 0\}\right]\cdot \mathbb{E}\bigr[ |s^T \mathbf{U}_t^{(k)}|\bigr]\\&\hspace{5mm}\geq T(1\cdot (1-q)+(-1)\cdot q)\cdot \gamma \sqrt{\frac{2}{\pi}}=T(1-2q)\gamma \sqrt{\frac{2}{\pi}},
\end{align*}
where the second inequality follows from \eqref{include} and the independence between $\mathbf{U}_t^{(k)}$ and  $\xi_{t-1}, \dots \xi_{t-2k+1}$; the last uses 
$q:=1-(1-p)^{2k-1}$ being the probability that not all $\xi_{t-1}, \dots \xi_{t-2k+1}$ are zero and $s^T \mathbf{U}_t^{(k)}\sim N(0,\gamma^2)$.
\end{proof}

\subsection{Proof of Lemma \ref{lemmadiff}}\label{lemma4proof}

\begin{proof}
     Let $s, \tilde s\in\mathbb{S}^{2km-1}$. Then, we have 
 \begin{align*}
     &\sum_{t=2k-1}^{T+2k-2}\mathbb{I}\{v_t^i = 0\}\cdot |s^T \mathbf{U}_t^{(k)}| - \sum_{t=2k-1}^{T+2k-2}\mathbb{I}\{v_t^i = 0\}\cdot |\tilde s^T \mathbf{U}_t^{(k)}| \\&\geq - \sum_{t=2k-1}^{T+2k-2} |(s-\tilde s)^T \mathbf{U}_t^{(k)}| \geq -\|s-\tilde s\|_2 \sum_{t=2k-1}^{T+2k-2}  \|\mathbf{U}_t^{(k)}\|_2 \numberthis\label{ssdiff}
 \end{align*}
The quantity $\sum_{t=2k-1}^{T+2k-2}  \|\mathbf{U}_t^{(k)}\|_2$ can be shown to have a Lipschitz constant of $\sqrt{2Tk}$, following an approach similar to \eqref{lipschitzbound}. Applying the Gaussian concentration inequality with  property \eqref{plus}, one arrives at
 \begin{align*}
     &\mathbb{P}\biggr(\sum_{t=2k-1}^{T+2k-2}  \|\mathbf{U}_t^{(k)}\|_2-\mathbb{E}[ \|\mathbf{U}_t^{(k)}\|_2] \leq T \gamma \sqrt{km}\biggr)\\&\geq 1-\exp\Bigr(-\Theta\Bigr(\frac{T^2 \gamma^2   km}{T \gamma^2 k}\Bigr)\Bigr) = 1-\exp(-\Theta(Tm)).
 \end{align*}
 Using
$\mathbb{E}[\|\mathbf{U}_t^{(k)}\|_2] \leq \Theta(\gamma \sqrt{km})$, the inequality can also be written as
\begin{align*}
    \mathbb{P}\biggr(\sum_{t=2k-1}^{T+2k-2} \|\mathbf{U}_t^{(k)}\|_2 \leq 2\cdot \Theta\bigr(T\gamma \sqrt{km}\bigr)\biggr) \geq 1-\frac{\delta}{2}
\end{align*}
when $T\geq \Theta( \frac{1}{m}\log(\frac{2}{\delta}))$. Considering the lower bound of \eqref{ssdiff} completes the proof.
\end{proof}

\subsection{Proof of Lemma \ref{sumxt}}\label{lemma5proof}
\begin{proof}
    Due to the system dynamics \eqref{sysd}, we have 
    \begin{align*}
        &\sum_{t=0}^{T-1} \|x_t\|_2 = \sum_{t=0}^{T-1} \Bigr\|A^t x_0 + \sum_{i=0}^{t-1} (A^{t-1-i}Bu_i+A^{t-1-i} w_{i})\Bigr\|_2 
        \\&\hspace{16mm}< \sum_{i=0}^{\infty} \|A\|_2^i   \Bigr[\|x_0\|_{2} + \sum_{t=0}^{T-2} (\|w_t\|_{2} +  \|Bu_t\|_{2})\Bigr]\\&\hspace{3mm}\leq\frac{1}{1-\|A\|_2}\Bigr[\|x_0\|_{2} + \sum_{t=0}^{T-2} (\|w_t\|_{2} +  \|B\|_2\|u_t\|_{2})\Bigr] \numberthis\label{submul}
    \end{align*}
    due to the triangle inequality. Let $S_T$ denote the term in \eqref{submul}.
    Under Assumption \ref{maxnorm}, the sub-Gaussian norms of $\|x_0\|_2$ and $\|w_t\|_2$ are bounded by $\eta$. Moreover, by Lemma \ref{independent}, the sub-Gaussian norm of $\|u_t\|_2$ is bounded by $\gamma \sqrt{m}$.
      Using the filtration $\mathcal{F}_t$ and Lemma \ref{centering}, it follows that $\|S_T-\mathbb{E}[S_T]\|_{\psi_2} \leq \frac{\Theta((\eta + \gamma \sqrt{m})\sqrt{T})}{1-\|A\|_2}$.
    Applying  property \eqref{plus}, one arrives at
    \begin{align*}
        \mathbb{P}\Bigr( S_T-\mathbb{E}[S_T] \leq \frac{(\eta+\gamma \sqrt{m}\cdot \|B\|_2)T}{1-\|A\|_2}   \Bigr) \geq 1-\exp(-\Theta(T)).
    \end{align*}
   Note that $\mathbb{E}[S_T]$ can be upper bounded using property \eqref{firstprop}; specifically, $\mathbb{E}[\|u_t\|_2] \leq \Theta(\gamma \sqrt{m})$, $\mathbb{E}[\|x_0\|_2] \leq \eta$, and $\mathbb{E}[\|w_t\|_2] \leq \eta$ given $\mathcal{F}_t$. Since  $\sum_{t=0}^{T-1} \|x_t\|_2\leq S_T$,  this concludes the proof.
\end{proof}

\vspace{3mm}
\printbibliography

\end{document}